\newtheorem{theorem}{Theorem}[section]
\newtheorem{corollary}[theorem]{Corollary}
\newtheorem{definition}[theorem]{Definition}
\newtheorem{proposition}[theorem]{Proposition}
\theoremstyle{remark}
\newtheorem{remark}[theorem]{Remark}
\newtheorem{example}[theorem]{Example}
\newcommand{\coTHH}{\mathrm{coTHH}}
\newcommand{\THH}{\mathrm{THH}}
\newcommand{\coHH}{\mathrm{coHH}}
\newcommand{\HH}{\mathrm{HH}}
\newcommand{\Tor}{\mathrm{Tor}}
\DeclareMathOperator{\Tot}{Tot}
\begin{document}

\title{Computations of Relative Topological CoHochschild Homology} 
\author{Sarah Klanderman}             
\email{sklanderman@marian.edu}
\address{Department of Mathematics,
         Marian University,
         3200 Cold Spring Road,
         Indianapolis, IN, 46222,
         USA}
\date{August 9, 2021}
\subjclass{16T15, 16E40, 55T99, 55P43.}
\keywords{Topological Hochschild homology, coalgebra, spectral sequence.}

\begin{abstract}
{Hess and Shipley defined an invariant of coalgebra spectra called topological coHochschild homology, and Bohmann-Gerhardt-H{\o}genhaven-Ship\-ley-Ziegenhagen developed a coB\"okstedt spectral sequence to compute the homology of $\coTHH$ for coalgebras over the sphere spectrum. We construct a relative coB\"okstedt spectral sequence to study $\coTHH$ of coalgebra spectra over any commutative ring spectrum $R$.  
Further, we use algebraic structures in this spectral sequence to complete some calculations of the homotopy groups of relative topological coHochschild homology.}
\end{abstract}

\maketitle

 \section{Introduction}
We develop computational tools for studying topological coHochschild homology ($\coTHH$). Recent work of Hess and Shipley \cite{hess2018invariance} defines this invariant to study coalgebra spectra. Work of Malkiewich \cite{malkiewich2017cyclotomic} and Hess-Shipley \cite{hess2018invariance} shows $\coTHH$ of suspension spectra is related to suspension spectra of free loop spaces for simply connected spaces $X$. In particular,
\[
\coTHH(\Sigma^\infty_+ X) \simeq \Sigma^\infty_+ \mathcal{L}X \simeq \THH(\Sigma^\infty_+(\Omega X)),
\]
where the last equivalence comes from work of B\"okstedt and Waldhausen \cite{bokstedtwaldhausen}.  Thus $\coTHH$ is relevant for studying the homology of free loop spaces, $\mathcal{L}X$, the main topic of the field of string topology \cite{chas1999string, cohen2003loop}. In fact, Bohmann-Gerhardt-Shipley use topological coHochschild homology to compute the homology of free loop spaces in \cite{bohmanngerhardtshipley}. Further, because $\THH(\Sigma^\infty_+(\Omega X))$ is directly related to the algebraic $K$-theory of the space $X$ via trace methods, $\coTHH$ also has applications for algebraic $K$-theory of spaces.

In 2018, Bohmann-Gerhardt-H{\o}genhaven-Shipley-Ziegenhagen showed that there is a \textit{coB\"okstedt spectral sequence} that is the Bousfield-Kan spectral sequence for the cosimplicial spectrum $\coTHH(C)^\bullet$, for $C$ a coalgebra spectrum over the sphere spectrum \cite{bohmann2018computational}. This spectral sequence has classical coHochschild homology of \cite{doi1981homological} as its $E_2$-term, and in cases when it does indeed converge we have:
\[
E_2^{*,*}= \coHH_*(H_*(C;k)) \implies H_*(\coTHH_*(C);k).
\]
In their work however, these tools are only set up to study coalgebra spectra over the sphere spectrum, $\mathbb{S}$. Examples of this sort are closely related to suspension spectra of spaces, and recent work of P\'eroux-Shipley shows that such examples are fairly limited \cite{P_roux_2019}.  In this paper, we broaden these tools to apply to $\coTHH$ for coalgebras over any commutative ring spectrum.

We call the spectral sequence that allows us to study the topological coHochschild homology of coalgebras over an arbitrary commutative ring spectrum $R$ the \textit{relative coB\"okstedt spectral sequence}$:$ 
\begin{theorem}
Let $E$ and $R$ be commutative ring spectra, $C$ an $R$-coalgebra spectrum that is cofibrant as an $R$-module, and $N$ a $(C,C)$-bicomodule spectrum. If $E_*(C)$ is flat over $E_*(R)$, then there exists a Bousfield-Kan spectral sequence for the cosimplicial spectrum $\coTHH^R(N,C)^\bullet$ that abuts to $E_{t-s}(\coTHH^R(N,C))$ with $E_2$-page
\[
E_2^{s,t} = \coHH^{E_*(R)}_{s,t}(E_*(N), E_*(C))
\]
given by the classical coHochschild homology of $E_*(C)$ with coefficients in $E_*(N)$.
\end{theorem}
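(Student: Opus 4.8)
The plan is to realize the claimed spectral sequence as the Bousfield-Kan spectral sequence of the cosimplicial spectrum $\coTHH^R(N,C)^\bullet$ taken in $E$-homology, and then to identify its $E_2$-page with coHochschild homology by means of a relative Künneth isomorphism that is forced by the flatness hypothesis. This follows the template of the absolute coB\"okstedt spectral sequence of Bohmann et al.\ over $\mathbb{S}$, with the relative smash product $\wedge_R$ and a relative Künneth theorem over $E_*(R)$ as the new ingredients.

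First I would recall that $\coTHH^R(N,C)$ is the totalization of the cosimplicial spectrum $\coTHH^R(N,C)^\bullet$ whose $s$-th term is $N \wedge_R C^{\wedge_R s}$, with cofaces built from the comultiplication of $C$ and the left and right coactions of $N$, and with codegeneracies built from the counit. The hypothesis that $C$ is cofibrant as an $R$-module guarantees that the point-set relative smash products appearing here compute the derived smash products, so the cosimplicial object is homotopically meaningful. The associated $E$-homology Bousfield-Kan spectral sequence then has the form
\[
E_1^{s,t} = E_t\!\left(N \wedge_R C^{\wedge_R s}\right) \implies E_{t-s}(\coTHH^R(N,C)),
\]
with $d_1$ the alternating sum of the maps induced on $E$-homology by the cofaces.

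The key step is to evaluate this $E_1$-term. For $R$-modules $M$ and $M'$ there is a relative Künneth spectral sequence
\[
\Tor^{E_*(R)}_{p,q}\!\left(E_*(M), E_*(M')\right) \implies E_{p+q}(M \wedge_R M'),
\]
available in the EKMM framework. Because $E_*(C)$ is flat over $E_*(R)$, all higher $\Tor$-groups vanish whenever one of the factors is $C$, and an induction on $s$ then produces the natural isomorphism
\[
E_*\!\left(N \wedge_R C^{\wedge_R s}\right) \cong E_*(N) \otimes_{E_*(R)} E_*(C)^{\otimes_{E_*(R)} s}.
\]
Only flatness of $E_*(C)$ is needed here: at each stage of the induction one smashes on a flat factor $E_*(C)$, so no hypothesis on $E_*(N)$ is required.

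Finally I would check that under this identification the $d_1$-differential agrees with the cobar differential. Since $E$-homology is lax symmetric monoidal and the Künneth isomorphism is natural, each coface map is carried to the corresponding structure map---comultiplication of $E_*(C)$ or a coaction of $E_*(N)$---of the cobar complex of Doi. Hence the $E_1$-page together with $d_1$ is exactly this cobar complex, and passing to cohomology yields
\[
E_2^{s,t} = \coHH^{E_*(R)}_{s,t}(E_*(N), E_*(C)),
\]
as claimed. I expect the main obstacle to be the Künneth step: one must verify that flatness of $E_*(C)$ genuinely propagates through all of the iterated relative smash products, and that the Künneth isomorphism is natural enough for the differential identification in the last step to go through cleanly.
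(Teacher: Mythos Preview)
Your proposal is correct and follows essentially the same approach as the paper: set up the Bousfield--Kan spectral sequence for the cosimplicial object $E\wedge\coTHH^R(N,C)^\bullet$, use flatness of $E_*(C)$ over $E_*(R)$ to identify the $E_1$-term with the tensor powers $E_*(N)\otimes_{E_*(R)}E_*(C)^{\otimes s}$, and recognize $d_1$ as the cobar differential. The only presentational difference is that where you invoke a relative K\"unneth spectral sequence directly, the paper first performs the base-change rewriting
\[
E\wedge\bigl(N\wedge_R C^{\wedge_R s}\bigr)\;\simeq\;(E\wedge N)\wedge_{E\wedge R}(E\wedge C)^{\wedge_{E\wedge R}\,s}
\]
and then applies the ordinary K\"unneth theorem over $E\wedge R$; these amount to the same thing.
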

Further, we identify conditions for convergence of this spectral sequence.  In particular, for the case when $E = \mathbb{S}$, if for every $s$ there exists some $r$ so that $E^{s,s+i}_r = E^{s,s+i}_\infty$ then the relative coB\"okstedt spectral sequence converges completely to $\pi_*(\coTHH^R(N,C))$.

By work of Angeltveit-Rognes, the classical B\"okstedt spectral sequence for a commutative ring spectrum has the structure of a spectral sequence of Hopf algebras under certain flatness conditions \cite{angeltveit2005hopf}.  Bohmann-Gerhardt-Shipley show that under appropriate coflatness conditions, the coB\"okstedt spectral sequence for a cocommutative coalgebra spectrum has what is called a \textit{$\square$-Hopf algebra structure}, an analog of a Hopf algebra structure for working over a coalgebra \cite{bohmanngerhardtshipley}. We show that the relative coB\"okstedt spectral sequence has a similar structure and use it to prove the following results.

\begin{theorem}
For a field $k$, let $C$ be a cocommutative $Hk$-coalgebra spectrum that is cofibrant as an $Hk$-module with $\pi_*(C) \cong \Lambda_k(y)$ for $|y|$ odd and greater than 1. Then the relative coB\"okstedt spectral sequence collapses and 
\[
\pi_*(\coTHH^{Hk}(C)) \cong \Lambda_k(y) \otimes k[w]
\]
as graded $k$-modules for $|w|=|y|-1$.
\end{theorem}

\begin{theorem}
Let $k$ be a field and let $p=char(k)$, including 0. For $C$ a cocommutative $Hk$-coalgebra spectrum that is cofibrant as an $Hk$-module with $\pi_*(C) \cong \Lambda_k(y_1, y_2)$ for $|y_1|, |y_2|$ both odd and greater than 1, if $p^m$ is not equal to $\frac{|y_2|-1}{|y_1|-1}+1$ for all $m \ge 1$ and $p \ne 2$,
then the relative coB\"okstedt spectral sequence collapses and
\[
\pi_*(\coTHH^{Hk}(C)) \cong \Lambda_k(y_1, y_2) \otimes k[w_1, w_2],
\]
as graded $k$-modules for $|w_i| = |y_i|-1$. Moreover, the same result holds at the prime $p=2$ with the additional assumption that $p^m \ne \frac{|y_2|-1}{|y_1|-1}$ for $m \ge 1$.
\end{theorem}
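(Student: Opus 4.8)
The plan is to apply the relative coB\"okstedt spectral sequence of Theorem 1.1 with $E = \mathbb{S}$, $R = Hk$, and $N = C$. Then $E_*(R) = \pi_*(Hk) = k$, so the flatness hypothesis is automatic (every module over a field is flat), and the spectral sequence takes the form
\[
E_2^{s,t} = \coHH^{k}_{s,t}\big(\pi_*(C)\big) \implies \pi_{t-s}\big(\coTHH^{Hk}(C)\big),
\]
which converges completely by the criterion recorded after Theorem 1.1 once we show it collapses. First I would compute the $E_2$-page. Since $k$ is a field and $C$ is cocommutative with $\pi_*(C) \cong \Lambda_k(y_1, y_2)$ the exterior coalgebra (each $y_i$ primitive), this coalgebra factors as $\Lambda_k(y_1) \otimes \Lambda_k(y_2)$, and a K\"unneth argument for coHochschild homology over a field reduces the computation to the one-generator case of Theorem 1.2. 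This yields
\[
E_2 \cong \Lambda_k(y_1, y_2) \otimes k[w_1, w_2]
\]
as a bigraded $k$-module, with $y_i \in E_2^{0, |y_i|}$ and $w_i \in E_2^{1, |y_i|}$, so that $|w_i| = |y_i| - 1$ in total degree and $w_1^i w_2^j$ sits in cohomological filtration $i + j$.

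Next I would invoke the $\square$-Hopf algebra structure on the relative coB\"okstedt spectral sequence, the analog for coalgebras of the Hopf-algebra structure of Angeltveit--Rognes and established for the coB\"okstedt spectral sequence by Bohmann--Gerhardt--Shipley. This structure forces every differential to be a map of $\square$-comodule algebras compatible with the $p$-th power operations, which sharply restricts the possible nonzero differentials. The classes $w_1, w_2$ and their products are permanent cycles by the usual multiplicativity and edge arguments, and $y_1$ survives exactly as in the one-generator case; the only class whose fate is genuinely in question is the exterior generator $y_2$ on the bottom line, and by multiplicativity it suffices to understand $d_r(y_2)$. By degree the only candidate is of the form $d_r(y_2) = \lambda\, w_1^{r}$ for some $\lambda \in k$.

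The key step is to pin down when this candidate can be nonzero. Comparing bidegrees, $d_r(y_2) = \lambda\, w_1^{r}$ lands in $E_2^{r, r|y_1|}$, which forces $|y_2| - 1 = r(|y_1| - 1)$, that is $r = \tfrac{|y_2|-1}{|y_1|-1}$; in particular $(|y_1|-1)\mid(|y_2|-1)$. The $\square$-Hopf algebra structure then constrains $\lambda$ to vanish unless the page index $r$ is compatible with the Frobenius, which, in analogy with the transpotence computations of Angeltveit--Rognes, occurs precisely when $r + 1$ is a power of $p$, i.e.\ $p^m = \tfrac{|y_2|-1}{|y_1|-1} + 1$. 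At an odd prime this is the only obstruction, so the hypothesis $p^m \ne \tfrac{|y_2|-1}{|y_1|-1} + 1$ kills every differential and the spectral sequence collapses at $E_2$. At $p = 2$ the failure of the sign and parity constraints permits an additional family of potential differentials $d_{2^m}(y_2) = \lambda\, w_1^{2^m}$, possible in bidegree exactly when $\tfrac{|y_2|-1}{|y_1|-1} = 2^m$; the extra hypothesis $p^m \ne \tfrac{|y_2|-1}{|y_1|-1}$ excludes these as well.

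Once all differentials vanish we have $E_\infty = E_2 = \Lambda_k(y_1, y_2) \otimes k[w_1, w_2]$, and since only the graded $k$-module structure is asserted there is no extension problem to resolve, giving the stated isomorphism. I expect the main obstacle to be the second half of the third paragraph: using the $\square$-Hopf algebra structure to show that the bidegree-permitted differentials $d_r(y_2) = \lambda\, w_1^{r}$ must vanish outside the excluded prime-power cases, and correctly isolating the extra prime-$2$ family. This requires a precise understanding of how the $\square$-coproduct interacts with the $p$-th power operations on the $w_i$-tower, mirroring the transpotence analysis of Angeltveit--Rognes, and is where the bulk of the technical work lies.
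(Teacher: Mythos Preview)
Your proposal misidentifies which differentials must be ruled out, and as a result your derivation of the numerical hypotheses is off by one in a way that cannot be repaired by the mechanism you suggest.

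The argument does not reduce to analyzing $d_r(y_2)$. In the $\square_{\pi_*(C)}$-algebra structure on $E_2 = \Lambda_k(y_1,y_2)\otimes k[w_1,w_2]$, the unit is the inclusion $\Lambda_k(y_1,y_2)\hookrightarrow E_2$, so $y_1,y_2$ lie in the image of the unit and are \emph{not} $\square$-algebra indecomposables; the indecomposables are exactly the classes in $\Lambda_k(y_1,y_2)\otimes\{w_1,w_2\}$. The structural input the paper uses is that the \emph{shortest} nonzero differential in lowest total degree must go from such an indecomposable to a $\square$-coalgebra primitive, and the additional $k$-coalgebra structure on the spectral sequence further narrows the admissible targets to the $k$-primitives $w_1^{p^m}$ and $w_2^{p^m}$ alone (not products like $y_jw_i^{p^m}$). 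The proof is then a finite bidegree check over the eight sources $w_i$, $y_jw_i$, $y_1y_2w_i$ against these two target families; most cases die by parity or by the assumption $|y_1|\le|y_2|$.

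Writing $a=|y_1|$, $b=|y_2|$, the main hypothesis $p^m\ne\frac{b-1}{a-1}+1$ arises from ruling out a differential on the indecomposable $y_2w_1$ (equivalently $y_1w_2$, same bidegree) hitting $w_1^{p^m}$: comparing bidegrees gives $1+r=p^m$ and $a+b+r-1=ap^m$, hence $\frac{b-1}{a-1}=p^m-1$. The extra $p=2$ condition $p^m\ne\frac{b-1}{a-1}$ comes from the indecomposable $y_2w_2$ hitting $w_1^{p^m}$, yielding $2\frac{b-1}{a-1}=p^m$, which is automatically excluded by parity at odd primes. By contrast, your candidate $d_r(y_2)=\lambda w_1^r$ gives $r=\frac{b-1}{a-1}$, and the primitivity constraint on the target is $r=p^m$, \emph{not} $r+1=p^m$; the ``Frobenius/transpotence'' shift you invoke to get $r+1$ is not derived from any structure present here and appears reverse-engineered from the statement. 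Your reduction to $d_r(y_2)$ via ``multiplicativity and edge arguments'' is likewise unjustified: the product on the spectral sequence is the $\square_{\Lambda_k(y_1,y_2)}$-product, not an ordinary $k$-algebra product, so no $k$-linear Leibniz rule is available to propagate the vanishing of $d_r(y_2)$ to the classes $y_jw_i$ that actually carry the potential differentials.
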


\noindent Further, in a result analogous to the work of Bohmann-Gerhardt-H{\o}genhaven-Shipley-Ziegenhagen \cite{bohmann2018computational} we have

\begin{theorem}
Let $k$ be a field and let $C$ be a cocommutative $Hk$-coalgebra spectrum that is cofibrant as an $Hk$-module spectrum,
and whose homotopy coalgebra is
\[
\pi_*(C) = \Gamma_k[x_1, x_2, \ldots],
\]
where the $x_i$ are in non-negative even degrees and there are only finitely many of them in each degree.  Then the relative coB\"okstedt spectral sequence  calculating the homotopy groups of the topological coHochschild homology of $C$ collapses at $E_2$, and
\[
\pi_*(\coTHH^{Hk}(C)) \cong \Gamma_k[x_1, x_2, \ldots] \otimes \Lambda_k(z_1, z_2, \ldots)
\]
as $k$-modules, with $z_i$ in degree $|x_i|-1$.
\end{theorem}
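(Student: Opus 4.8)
The plan is to apply the relative coB\"okstedt spectral sequence of the first theorem with $E = R = Hk$, so that $E_*(R) = \pi_*(Hk) = k$ and $E_*(C) = \pi_*(C) = \Gamma_k[x_1, x_2, \ldots]$. Since $k$ is a field, $E_*(C)$ is automatically flat (indeed free) over $E_*(R) = k$, so the hypotheses are satisfied and we obtain a spectral sequence with $E_2^{s,t} = \coHH^k_{s,t}(\Gamma_k[x_1, x_2, \ldots])$ abutting to $\pi_{t-s}(\coTHH^{Hk}(C))$. The first substantive step is therefore to compute this classical coHochschild homology over $k$.

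To do so I would pass to the graded linear dual. Because $\Gamma_k[x_1, x_2, \ldots]$ is of finite type (finitely many generators in each degree, all of positive even degree by the finiteness hypothesis), dualizing degreewise carries the coHochschild cosimplicial $k$-module of $C$ to the Hochschild simplicial $k$-module of the dual algebra $A = \pi_*(C)^\vee \cong k[\xi_1, \xi_2, \ldots]$, a polynomial algebra on even generators with $|\xi_i| = |x_i|$; consequently $\coHH^k_{s,t}(\Gamma_k[x_i]) \cong \HH^k_{s,t}(k[\xi_i])^\vee$. By the Hochschild--Kostant--Rosenberg theorem (together with a K\"unneth reduction to one variable at a time, valid over a field) one has $\HH^k_*(k[\xi_i]) \cong k[\xi_i] \otimes \Lambda_k(d\xi_1, d\xi_2, \ldots)$ with $d\xi_i$ in simplicial degree $1$ and internal degree $|\xi_i|$. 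Dualizing sends $k[\xi_i]$ back to $\Gamma_k[x_i]$ and the self-dual exterior part to $\Lambda_k(z_1, z_2, \ldots)$, with $z_i$ in bidegree $(s,t) = (1, |x_i|)$ and hence total degree $t - s = |x_i| - 1$. This identifies the $E_2$-page as $\Gamma_k[x_i] \otimes \Lambda_k(z_i)$.

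The heart of the argument is the collapse, for which I would use the $\square$-Hopf algebra structure on the relative coB\"okstedt spectral sequence; cocommutativity of $C$ together with the field hypothesis (which makes the relevant coflatness automatic) guarantees this structure is available, and under it each differential $d_r$ is a coderivation for the $\square$-comultiplication. A coderivation sends primitives to primitives, and an inspection of $\Gamma_k[x_i] \otimes \Lambda_k(z_i)$ shows that its primitives are spanned by the $x_i$, in filtration $0$, and the $z_i$, in filtration $1$; there are none in filtration $\geq 2$. Since $x_i$ and $z_i$ are primitive, $d_r(x_i)$ and $d_r(z_i)$ are primitives lying in filtrations $r \geq 2$ and $1 + r \geq 3$ respectively, hence both vanish. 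An induction on $j$ using the coderivation identity and the reduced coproduct $\bar\Delta\,\gamma_j(x_i) = \sum_{a+b=j,\ a,b \geq 1} \gamma_a(x_i) \otimes \gamma_b(x_i)$ then shows each divided power $\gamma_j(x_i)$ is also a permanent cycle, its differential being forced to be a primitive in filtration $\geq 2$. As the classes $\gamma_j(x_i)$ and $z_i$ generate the $E_2$-page as an algebra, the Leibniz rule propagates the vanishing to every differential, so the spectral sequence collapses at $E_2$.

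Finally I would verify convergence and resolve extensions. Writing a basis element as a divided-power monomial times a product of exterior classes, its simplicial filtration $s$ equals the number of exterior factors, and since each $|x_i| - 1 \geq 1$ the total degree $t - s$ is at least $s$; thus the filtration is bounded in each total degree, the spectral sequence converges, and $E_\infty = E_2$. As every group in sight is a $k$-vector space, the induced filtration of $\pi_*(\coTHH^{Hk}(C))$ splits, yielding the stated isomorphism $\Gamma_k[x_i] \otimes \Lambda_k(z_i)$ of graded $k$-modules with $|z_i| = |x_i| - 1$. The main obstacle is the collapse step: making precise the $\square$-coderivation property of the differentials and the computation of primitives --- equivalently, establishing that the $z_i$ and the divided powers $\gamma_j(x_i)$ are permanent cycles --- is where the real work lies, the $E_2$-computation and the convergence being comparatively routine.
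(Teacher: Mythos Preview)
Your argument reaches the right conclusion, but with two slips worth flagging, and it takes a different route to collapse than the paper.

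First, you set $E = R = Hk$, yet then write $E_*(R) = \pi_*(Hk) = k$; that identification only holds for $E = \mathbb{S}$ (as in Corollary~\ref{Cor-pi}). With $E = Hk$ you would get $\pi_*(Hk \wedge Hk)$, not $k$. Second, you invoke the $\square$-comultiplication but then list the $k$-coalgebra primitives $x_i, z_i$; by Proposition~\ref{primitive} the $\square_{\pi_*(C)}$-primitives are instead all of $\Gamma_k[x_i] \otimes z_j$, concentrated in filtration~$1$. Either notion gives ``no primitives in filtration $\geq 2$,'' so your conclusion survives, but the two are not the same.

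More substantively, your final Leibniz-rule step is not justified as stated: the paper establishes a $k$-coalgebra structure (Theorem~\ref{coalgebrass}) and a $\square_C$-Hopf algebra structure on the spectral sequence, but no $k$-algebra structure for which $d_r$ is a derivation, and the $\square_C$-product does not multiply divided powers against one another. You do not actually need it, however: your own coderivation-plus-induction argument for $\gamma_j(x_i)$ extends verbatim to an arbitrary basis monomial $\gamma_J z_I$, forcing its differential to be a primitive in filtration $|I| + r \geq 2$, hence zero.

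This is where you genuinely differ from the paper. The paper's route to collapse is a single application of Theorem~\ref{shortest}: a shortest nonzero differential in lowest total degree must target a $\square_{\pi_*(C)}$-primitive, all of which live in filtration~$1$, while any $d_r$ with $r \geq 2$ lands in filtration $\geq 2$; hence no nonzero differential exists. Your inductive argument is in effect an inline reproof of that principle --- more self-contained, but longer, and (as written) leaning on an algebra structure that is not available. Your $E_2$-computation via graded duality with Hochschild homology and HKR is also different from the paper's bare citation of Proposition~5.1 of \cite{bohmann2018computational}, but it is equivalent and arguably more informative.
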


\subsection{Organization}
The paper is organized as follows.  Section 2 provides background on coalgebras in spectra and (topological) coHochschild homology. In Section 3 we construct the relative coB\"okstedt spectral sequence.  Section 4 examines the algebraic structures of this spectral sequence based on work of \cite{bohmanngerhardtshipley}, and Section 5 discusses some explicit calculations of topological coHochschild homology. 

\subsection{Acknowledgements}
These results are part of the author's dissertation work, and as such the author would like to thank her advisor, Teena Gerhardt, for her help and guidance over the years. In addition, discussions with Maximilien P\'eroux and \"Ozg\"ur Bay\i nd\i r about their work with coalgebras and conversations with Gabe Angelini-Knoll were particularly helpful.

\section{Topological coHochschild homology}

We recall the definition of a coalgebra for the general setting of a symmetric monoidal category in order to introduce coalgebras in spectra and consider examples.

\begin{definition}
    Let $(\mathcal{D}, \otimes, 1)$ be a symmetric monoidal category. Then a (coassociative, counital) \textbf{coalgebra} $C \in \mathcal{D}$ has a comultiplication $\Delta: C \to C \otimes C$ that is coassociative and counital, i.e. there exists a counit morphism $\epsilon: C \to 1$ such that the following coassociativity and counitality diagrams commute:
    
    \begin{minipage}{.45\textwidth}
    {\large
    \[ \begin{tikzcd}[ampersand replacement=\&]
    C \arrow{r}{\Delta} \arrow[swap]{d}{\Delta} \& C \otimes C \arrow{d}{\text{Id } \otimes \Delta} \\%
    C \otimes C \arrow{r}{\Delta \otimes \text{Id }} \& C \otimes C \otimes C
    \end{tikzcd}
    \]}
    \end{minipage}
    \begin{minipage}{.4\textwidth}
    {\large
    \[ \begin{tikzcd}[ampersand replacement=\&]
    C \arrow{r}{\Delta} \arrow[swap]{d}{\Delta} \arrow[swap]{dr}{\text{Id}} \& C \otimes C \arrow{d}{\text{Id } \otimes \epsilon} \\%
    C \otimes C \arrow{r}{\epsilon \otimes \text{Id }} \& 1 \otimes C \cong C \cong C \otimes 1
    \end{tikzcd}
    \]}
    \end{minipage}
\end{definition}

Examples of coalgebras over a field include the polynomial, exterior, and divided power coalgebras. 

\begin{example}
For a field $k$, the polynomial coalgebra $k[w_1, w_2, \ldots]$ for $w_i$ in even degree is the vector space with basis given by $\{w_i^j \}$ for $j \ge 0$ and $i \ge 1$. It has coproduct
\begin{align*}
    \Delta(w_i^j) &= \sum_k \binom{j}{k} w_i^k \otimes w_i^{j-k}
\end{align*}
and counit 
\begin{align*}
    \epsilon (w_i^j) &=  \begin{cases} 1 & \text{if } j=0\\ 0 & \text{if } j \ne 0. \end{cases}
\end{align*}
\end{example}

\begin{example}
For a field $k$, the exterior coalgebra $\Lambda_k(y_1, y_2, \ldots)$ for $y_i$ in odd degrees is the vector space with basis given by $\{1, y_i \}$ for $i \ge 1$, which has coproduct 
\begin{align*}
    \Delta(y_i) &= 1 \otimes y_i + y_i \otimes 1\\
    \Delta(1) &= 1 \otimes 1
\end{align*}
and counit 
\begin{align*}
    \epsilon(y_i) &= 0\\
    \epsilon(1)&=1
\end{align*}
\end{example}

\begin{example}
For a field $k$, the divided power coalgebra $\Gamma_k[x_1, x_2, \ldots]$ with $x_i$ in even degrees is the vector space with basis given by $\{ \gamma_j(x_i) \}$ for $j \ge 0$ and $i \ge 1$.  It has coproduct
\begin{align*}
    \Delta(\gamma_j (x_i)) &= \sum_{a+b=j} \gamma_a(x_i) \otimes \gamma_b(x_i)
\end{align*}
where $\gamma_0(x_i)= 1, \gamma_1(x_i)=x_i$, and counit 
\begin{align*}
    \epsilon (\gamma_j(x_i)) &= \begin{cases} 1 & \text{if } j=0\\ 0 & \text{if } j \ne 0. \end{cases}
\end{align*}
\end{example}

Because it will also be useful to have the ``dual'' notion of modules on which $C$ \textit{coacts}, we also recall the following definition of a comodule.

\begin{definition}
    Let $R$ be a commutative ring and $C$ an $R$-coalgebra.  Then $N$ is a \textbf{right $C$-comodule} if it is an $R$-module together with an $R$-linear map $\gamma: N \to N \otimes_R C$ that is coassociative and counital, so that the following diagrams commute:
    
    \begin{minipage}{.45\textwidth}
     {\large
    \[ \begin{tikzcd}[ampersand replacement=\&]
    N \arrow{r}{\gamma} \arrow[swap]{d}{\gamma} \& N \otimes_R C \arrow{d}{\text{Id } \otimes \Delta} \\%
    N \otimes_R C \arrow{r}{\gamma \otimes \text{Id }} \& N \otimes_R C \otimes_R C
    \end{tikzcd}
    \]}
    \end{minipage}
    \begin{minipage}{.4\textwidth}
    {\large
    \[ \begin{tikzcd}[ampersand replacement=\&]
    N \arrow{r}{\gamma} \arrow[swap]{dr}{\text{Id}} \& N \otimes_R C \arrow{d}{\text{Id } \otimes \epsilon} \\%
    \& N
    \end{tikzcd}
    \]}
    \end{minipage}
    
    \noindent The map $\gamma$ is referred to as a \textit{right $C$-coaction}. Similarly, a \textbf{left $C$-comodule} is an $R$-module together with an $R$-linear map $\psi: N \to C \otimes_R N$ that is coassociative and counital, so that the following diagrams commute:
    
    \begin{minipage}{.45\textwidth}
     {\large
    \[ \begin{tikzcd}[ampersand replacement=\&]
    N \arrow{r}{\psi} \arrow[swap]{d}{\psi} \& C \otimes_R N \arrow{d}{\Delta \otimes \text{Id }} \\%
    C \otimes_R N \arrow{r}{\text{Id } \otimes \psi} \& C \otimes_R C \otimes_R N
    \end{tikzcd}
    \]}
    \end{minipage}
    \begin{minipage}{.4\textwidth}
    {\large
    \[ \begin{tikzcd}[ampersand replacement=\&]
    N \arrow{r}{\psi} \arrow[swap]{dr}{\text{Id}} \& C \otimes_R N \arrow{d}{\epsilon \otimes \text{Id }} \\%
    \& N
    \end{tikzcd}
    \]}
    \end{minipage}
    
    \noindent The map $\psi$ is referred to as a \textit{left $C$-coaction}.
\end{definition}

The right and left comodule structures together can be used to define a bicomodule:

\begin{definition}
     For $R$-coalgebras $C, D$, a $(C,D)$-\textbf{bicomodule} $N$ is a left $C$-comodule with left coaction $\psi: N \to C \otimes_R N$ and right $D$-comodule with right coaction $\gamma: N \to N \otimes_R D$ that satisfies the following commutative diagram:
     {\large
    \[ \begin{tikzcd}[ampersand replacement=\&]
    N \arrow{r}{\gamma} \arrow[swap]{d}{\psi} \& N \otimes_R D \arrow{d}{\psi \otimes \text{Id }} \\%
    C \otimes_R N \arrow{r}{\text{Id } \otimes \gamma} \& C \otimes_R N \otimes_R D
    \end{tikzcd}
    \]}
\end{definition}

Since we want to apply these definitions in the topological setting, we introduce our notation for coalgebras in spectra in the following example.

\begin{example}
    A \textbf{coalgebra spectrum} is a coalgebra in one of the symmetric monoidal categories of spectra. For a commutative ring spectrum $R$, an \textbf{$R$-coalge\-bra spectrum} $C$ is a coalgebra in the symmetric monoidal category of $R$-modules.  It has comultiplication $\Delta: C \to C \wedge_R C$ and counit $\epsilon: C \to R$, satisfying the coassociativity and counitality conditions.
\end{example}

A familiar example of a coalgebra in spectra comes from suspension spectra, since for a space $X$, the diagonal map $X \to X \times X$ on topological spaces induces a comultiplication map on the suspension spectrum $\Sigma_+^\infty (X)$ making $\Sigma_+^\infty (X)$ into a coalgebra spectrum.
However, most spectra do not have diagonal maps and P\'eroux-Shipley \cite{P_roux_2019} show that examples of this form are quite limited.  In particular, they prove that all coalgebra spectra over $\mathbb{S}$ are cocommutative in monoidal categories of spectra such as symmetric spectra, orthogonal spectra, $\mathbb{S}$-modules, etc. As we saw in the above example, some coalgebras over the sphere spectrum come from suspension spectra; in fact, P\'eroux-Shipley further show that in model categories all $\mathbb{S}$-coalgebras are closely related to suspension spectra. 

This rigid structure of $\mathbb{S}$-coalgebras thus provides motivation for studying other kinds of coalgebra spectra.
Examples of these kinds of coalgebra spectra include $H\mathbb{F}_p$-coalgebras such as 
$H\mathbb{F}_p \wedge_{H\mathbb{Z}} H\mathbb{F}_p$,
which we will examine later on in further detail. We will study these coalgebras via the invariant \textit{topological coHochschild homology}. 
 
Bohmann-Gerhardt-H{\o}genhaven-Shipley-Ziegenhagen \cite{bohmann2018computational} define (topological) coHochschild homology for any general symmetric monoidal category, but classical coHochschild homology as defined by Doi \cite{doi1981homological} can be recovered from this definition by considering the category of coalgebras over a field.

\begin{definition}
Let $(\mathcal{D}, \otimes, 1)$ be a symmetric monoidal model category and let $C \in \mathcal{D}$ be a coalgebra with coassociative comultiplication $\Delta: C \to C \otimes C$ and counit $\epsilon: C \to 1$.  Further, let $N$ be a $(C,C)$-bicomodule with left and right coactions $\psi: N \to C \otimes N$ and $\gamma: N \to N \otimes C$.
Define $\coTHH(N, C)^\bullet$ to be the  cosimplicial object with r-simplices $\coTHH(N, C)^r = N \otimes C^{\otimes r}$,
with coface maps
\begin{align*}
    \delta_i = \begin{cases}
\gamma \otimes \text{Id}^{\otimes r} &i=0\\
\text{Id}^{\otimes i} \otimes \Delta \otimes \text{Id}^{\otimes (r-i)} &0 < i \le r\\
\Tilde{t} \circ (\psi \otimes \text{Id}^{\otimes r}) &i=r+1
\end{cases}
\end{align*}
where $\Tilde{t}$ is the map that twists the first factor to the last, and with codegeneracy maps $\sigma_i: N \otimes C^{\otimes (r+1)} \to N \otimes C^{\otimes r}$ for $0 \le i \le r$
\[
\sigma_i = \text{Id}^{\otimes (i+1)} \otimes \epsilon \otimes \text{Id}^{\otimes r-i}.
\]

This gives a cosimplicial object of the form
\begin{align*}
    &\vdots\\
    N \otimes &C \otimes C\\
    \uparrow \downarrow &\uparrow \downarrow \uparrow\\
    N &\otimes C\\
    \uparrow &\downarrow \uparrow\\
    &N
\end{align*}
The \textbf{topological coHochschild homology} of the coalgebra $C$ with coefficients in $N$ is defined by 
\[
\coTHH(N,C) = \Tot(\mathcal{R} \coTHH(N,C)^\bullet)
\]
where $\mathcal{R}$ is the Reedy fibrant replacement and $\Tot$ represents the totalization.
\end{definition}

The above definition gives topological coHochschild homology with coefficients in a $(C,C)$-bicomodule $N$, but when we consider $C$ as a bicomodule over itself, we write $\coTHH(C)$ for coefficients in $C$.
We will further decorate the notation with $\coTHH^R(C)$ when we consider the topological coHochschild homology of $C$ relative to $R$, i.e. $\coTHH$ of an $R$-coalgebra $C$ for $R$ a commutative ring spectrum.

As mentioned above, working in the category of coalgebras over a field recovers the classical $\coHH$ of Doi \cite{doi1981homological}.  Our general convention will be to specifically refer to this construction as \textit{topological} coHochschild homology when we are considering as input some coalgebra \textit{spectrum}.  For instance, we will refer to the work of Hess-Parent-Scott \cite{hess2009cohochschild} as studying \textit{coHochschild homology} of differential graded coalgebras (dg-coalgebras) over a field. However, these constructions are all coming from the same general framework by work of Bohmann-Gerhardt-H{\o}genhaven-Shipley-Ziegenhagen \cite{bohmann2018computational}.

\section{Construction of a relative coB\"okstedt spectral sequence}

Given the homology theory $E_*$ associated to the commutative ring spectrum $E$, recall the classical B\"okstedt spectral sequence \cite{bokstedt1985topological} for studying the topological Hochschild homology of an $R$-algebra:

\begin{theorem}[{\cite[Thm X.2.9]{elmendorf1995rings}}]
Suppose $E$ and $R$ are commutative ring spectra, 
$A$ is an $R$-algebra, and
$M$ is a cell $(A,A)$-bimodule.  Then if $E_*(A)$
is flat over $E_*(R)$, 
then there exists a spectral sequence
\[
E^2_{p,q} = \HH^{E_*(R)}_{p,q}(E_*(A),E_*(M)) \implies E_{p+q}(\THH^R(A,M))
\]
\end{theorem}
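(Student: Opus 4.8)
The plan is to realize $\THH^R(A,M)$ as the geometric realization of the cyclic bar construction and then run the homology spectral sequence of this simplicial spectrum. Recall that $\THH^R(A,M)$ is by definition $|B^{\mathrm{cy}}(A;M)_\bullet|$, where $B^{\mathrm{cy}}(A;M)_\bullet$ is the simplicial $R$-module with $p$-simplices $M \wedge_R A^{\wedge_R p}$, with face maps built from the multiplication $A \wedge_R A \to A$ together with the two bimodule action maps of $M$, and degeneracies built from the unit $R \to A$. This is the topological analogue of the Hochschild chain complex, so one expects its $E$-homology to recover algebraic Hochschild homology.

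First I would invoke the standard spectral sequence associated to the skeletal filtration of a simplicial spectrum. For any simplicial spectrum $X_\bullet$ there is a conditionally convergent spectral sequence
\[
E^1_{p,q} = E_q(X_p) \implies E_{p+q}(|X_\bullet|),
\]
whose $d^1$ differential is the alternating sum $\sum_i (-1)^i (d_i)_*$ of the maps induced on $E_*$ by the simplicial faces, so that $E^2$ is the homology of the simplicial graded module $E_q(X_\bullet)$. Applying this to $X_\bullet = B^{\mathrm{cy}}(A;M)_\bullet$ produces a spectral sequence abutting to $E_{p+q}(\THH^R(A,M))$ with $E^1_{p,q} = E_q(M \wedge_R A^{\wedge_R p})$.

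The key step is to identify this $E^1$-page using the flatness hypothesis. Since $E$ and $R$ are commutative ring spectra, $E \wedge R$ is a ring spectrum and $E_*(R)$ is a graded ring; because $A$ is an $R$-algebra, $E_*(A)$ is a graded $E_*(R)$-algebra and $E_*(M)$ is an $E_*(A)$-bimodule. The assumption that $E_*(A)$ is flat over $E_*(R)$ yields the K\"unneth isomorphism
\[
E_*(M \wedge_R A^{\wedge_R p}) \cong E_*(M) \otimes_{E_*(R)} E_*(A)^{\otimes_{E_*(R)} p},
\]
which identifies the $E^1$-page, level by level, with the cyclic bar complex of the $E_*(R)$-algebra $E_*(A)$ with coefficients in $E_*(M)$. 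Under this identification the face maps induce precisely the algebraic multiplication and bimodule actions, so $d^1$ becomes the Hochschild boundary. Taking homology in the $p$-direction then gives
\[
E^2_{p,q} = \HH^{E_*(R)}_{p,q}(E_*(A), E_*(M)),
\]
the bigrading recording the simplicial degree $p$ and the internal degree $q$ of the graded ring $E_*(A)$.

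I expect the main obstacle to lie not in the formal setup but in the two technical points that the flatness hypothesis is designed to handle: establishing the K\"unneth isomorphism over $R$ in $E$-homology (without flatness there would be $\Tor$-correction terms obstructing the clean level-wise identification of the $E^1$-page), and securing genuine convergence of the geometric realization spectral sequence. The hypothesis that $M$ is a cell $(A,A)$-bimodule supplies the cofibrancy needed for the relative smash products to compute the correct derived objects and for the skeletal filtration to behave well; verifying that this filtration is exhaustive and that the relevant $\lim^1$ terms vanish is where the real work is concentrated.
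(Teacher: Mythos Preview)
Your proposal is correct and follows exactly the approach the paper indicates: the paper does not give a detailed proof of this cited result, but simply remarks that it ``follows from the general spectral sequence that arises from the skeletal filtration of the simplicial spectrum $\THH^R(A,M)_\bullet$,'' which is precisely the argument you have outlined in full.
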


This result follows from the general spectral sequence that arises from the skeletal filtration of the simplicial spectrum $\THH^R(A,M)_\bullet$. Similarly, associated to a cosimplicial spectrum is a Bousfield-Kan spectral sequence \cite{bousfield1972homotopy}, which when applied to the cosimplicial spectrum $\coTHH(C)^\bullet$ yields a spectral sequence Bohmann-Gerhardt-H{\o}genhaven-Shipley-Ziegenhagen \cite{bohmann2018computational} call the \textit{coB\"okstedt spectral sequence}.  They identify its $E_2$-term as the classical coHochschild homology of coalgebras in the sense of Doi \cite{doi1981homological}:
\begin{theorem}[{\cite[Thm 4.1]{bohmann2018computational}}]
Let $k$ be a field and $C$ a coalgebra spectrum that is cofibrant as a spectrum.  Then the Bousfield-Kan spectral sequence for the cosimplicial spectrum $\coTHH(C)^\bullet$ gives a coB\"okstedt spectral sequence for calculating $H_{t-s}(\coTHH(C);k)$ with $E_2$-page
\[
E_2^{s,t} = \coHH^k_{s,t}(H_*(C;k))
\]
given by the classical coHochschild homology of $H_*(C;k)$ as a graded $k$-module.
\end{theorem}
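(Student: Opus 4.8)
The plan is to feed the cosimplicial spectrum $\coTHH(C)^\bullet$ into the Bousfield--Kan spectral sequence associated to its totalization tower and then identify the low-page data algebraically. Recall that for a cosimplicial spectrum $X^\bullet$ the Bousfield--Kan spectral sequence computing $H_{t-s}(\Tot X^\bullet; k)$ has $E_1$-term $E_1^{s,t} = H_t(X^s; k)$, with $d_1$ the alternating sum $\sum_i (-1)^i (\delta_i)_*$ of the maps induced by the coface maps, so that $E_2^{s,t} = \pi^s H_t(X^\bullet;k)$ is the $s$-th cohomotopy of the cosimplicial graded $k$-module $H_t(X^\bullet;k)$. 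Applying this to $X^\bullet = \coTHH(C)^\bullet$, where (taking $N=C$) one has $\coTHH(C)^s = C^{\wedge(s+1)}$, reduces everything to computing $H_*$ of each cosimplicial level and tracking the induced coface maps.

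First I would establish the $E_1$-page via the Künneth theorem. Because $k$ is a field and $C$ is cofibrant as a spectrum, $H_*(-;k)$ is strong symmetric monoidal on the relevant smash powers, yielding a natural isomorphism $H_t(C^{\wedge(s+1)};k) \cong \big(H_*(C;k)^{\otimes(s+1)}\big)_t$. Thus $E_1^{s,t} \cong (H_*(C;k)^{\otimes(s+1)})_t$, which exhibits the $E_1$-page as exactly the (unnormalized) cobar-type cochain groups of the graded $k$-coalgebra $H_*(C;k)$ with coefficients in itself.

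The heart of the argument is to identify $d_1$ with Doi's coHochschild differential. Here I would apply $H_*(-;k)$ to each coface map $\delta_i$ and use naturality of the Künneth isomorphism: the maps $(\delta_0)_*$ and $(\delta_{r+1})_*$ become the right and left coactions of $H_*(C;k)$ on itself (each induced by $H_*(\Delta)$, since $N=C$ forces $\gamma=\psi=\Delta$), while $(\delta_i)_*$ for $0 < i \le r$ becomes $\mathrm{Id}^{\otimes i} \otimes \Delta_* \otimes \mathrm{Id}^{\otimes(r-i)}$ with $\Delta_* = H_*(\Delta)$. Consequently $(E_1^{*,t}, d_1)$ is precisely the complex whose cohomology defines $\coHH^k_{s,t}(H_*(C;k))$, and passing to cohomology gives $E_2^{s,t} = \coHH^k_{s,t}(H_*(C;k))$, as claimed.

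The main obstacle I anticipate is exactly this last identification: one must verify that the Künneth isomorphism is natural and monoidal enough that $H_*(\Delta)$ genuinely agrees with the abstract comultiplication making $H_*(C;k)$ a coassociative, counital $k$-coalgebra, and that the twist map $\tilde t$ appearing in $\delta_{r+1}$ induces the expected permutation behaviour on homology. Once this compatibility of the monoidal structure with the coalgebra structure maps is in place, the remaining bookkeeping---checking coassociativity, counitality, and that the alternating sum of induced cofaces is literally Doi's boundary---is formal.
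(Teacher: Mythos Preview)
Your proposal is correct and follows essentially the same approach as the paper. The paper does not prove this cited statement directly, but its proof of the relative generalization specializes to exactly your argument: set up the Bousfield--Kan spectral sequence for the cosimplicial object, identify the $E_1$-page levelwise via K\"unneth (using that $k$ is a field so the flatness hypothesis is automatic), and recognize the alternating sum of the induced coface maps as Doi's coHochschild differential. The only cosmetic differences are that the paper works with the normalized complex $N^s\pi_t(X^\bullet)$ rather than the unnormalized one (harmless, since they have the same cohomology) and that the paper first smashes with the homology theory and takes $\pi_*$ of the resulting cosimplicial object rather than applying $H_*(-;k)$ directly to each level; both set-ups produce the same $E_1$-page and $d_1$.
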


The Bousfield-Kan spectral sequence does not always converge, so the authors specify conditions under which the coB\"okstedt spectral sequence will converge completely.\footnote{For instance, the coB\"okstedt spectral sequence converges when $C$ is a suspension spectrum $\Sigma_+^\infty X$ for simply connected $X$ \cite{bohmann2018computational}.}  Note that this spectral sequence computes the ordinary homology over a field of $\coTHH(C)$ where $C$ is an $\mathbb{S}$-coalgebra.  

We want a relative version of this theorem in order to study $R$-coalgebra spectra more broadly, giving the Bousfield-Kan spectral sequence computing the homology of $\coTHH^R(C)$.  As in the $\THH$ setting, we would expect that some flatness conditions must be satisfied.
We first formally state and prove that this \textit{relative coB\"okstedt spectral sequence} exists, and then identify a corollary that will be useful for computations.  Further, we will describe conditions for convergence of this spectral sequence. Note in particular that this result holds for any generalized homology theory in addition to being over the more general ring spectrum $R$.

\begin{theorem}
Let $E$ and $R$ be commutative ring spectra, $C$ an $R$-coalgebra spectrum that is cofibrant as an $R$-module, and $N$ a $(C,C)$-bicomodule spectrum. If
$E_*(C)$ is flat over $E_*(R)$, then there exists a Bousfield-Kan spectral sequence for the cosimplicial $R$-module $\coTHH^R(N, C)^\bullet$ that abuts to $E_{t-s}(\coTHH^R(N,C))$ with $E_2$-page
\[
E_2^{s,t} = \coHH^{E_*(R)}_{s,t}(E_*(N), E_*(C))
\]
given by the classical coHochschild homology of $E_*(C)$ with coefficients in $E_*(N)$.
\end{theorem}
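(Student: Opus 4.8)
The plan is to obtain this spectral sequence as the Bousfield–Kan spectral sequence of the cosimplicial $R$-module $\coTHH^R(N,C)^\bullet$, applying the generalized homology theory $E_*$ levelwise and then identifying the resulting $E_2$-page with relative classical coHochschild homology. First I would recall the general machinery: for any Reedy fibrant cosimplicial spectrum $X^\bullet$, the tower of partial totalizations $\{\Tot_s\}$ gives rise to a Bousfield–Kan spectral sequence, and after smashing with $E$ (or applying $E_*(-)$) the $E_1$-page is the normalized cochain complex of the cosimplicial abelian group $[s] \mapsto E_*(X^s)$, with $E_2$ its cohomotopy. In our situation $X^s = \coTHH^R(N,C)^s = N \wedge_R C^{\wedge_R s}$, so the first substantive step is to compute $E_*$ of each cosimplicial level and show that it recovers the algebraic cosimplicial object computing relative $\coHH$.

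The key computational step is the flatness reduction. Applying $E_*$ to $N \wedge_R C^{\wedge_R s}$, I would use the fact that $E_*(C)$ is flat over $E_*(R)$ to establish a Künneth-type isomorphism
\[
E_*\bigl(N \wedge_R C^{\wedge_R s}\bigr) \cong E_*(N) \otimes_{E_*(R)} E_*(C)^{\otimes_{E_*(R)} s}.
\]
This is the same flatness hypothesis that powers the classical (and relative) Bökstedt spectral sequence quoted as Theorem~3.1, and it is what allows the smash products over $R$ to be translated into tensor products over $E_*(R)$ without higher Tor terms intervening. I would then check that the coface maps $\delta_i$ and codegeneracies $\sigma_i$ of Definition (the cosimplicial structure of $\coTHH^R(N,C)^\bullet$) induce, under this Künneth isomorphism, exactly the coface and codegeneracy maps of the algebraic cosimplicial object whose cohomotopy defines $\coHH^{E_*(R)}_{*,*}(E_*(N), E_*(C))$ in the sense of Doi. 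Concretely, the comultiplication $\Delta$ on $C$ induces the comultiplication on $E_*(C)$, the coactions $\psi,\gamma$ on $N$ induce the comodule coactions on $E_*(N)$, and the counit induces the counit; the twist map $\Tilde{t}$ contributes the expected Koszul signs in the graded setting. Taking cohomotopy of this normalized complex then yields the claimed $E_2$-page with its bigrading $(s,t)$.

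The main obstacle I anticipate is not the formal spectral sequence construction but ensuring that the Künneth isomorphism is natural in the cosimplicial direction and respects all the structure maps simultaneously, including the degree-shifting grading $t$ coming from the internal grading of $E_*$. One must verify that flatness of $E_*(C)$ over $E_*(R)$ is preserved under iterated tensor powers (so that $E_*(C)^{\otimes_{E_*(R)} s}$ remains flat and the Künneth map stays an isomorphism at every level), and that Reedy fibrant replacement $\mathcal{R}$ does not disturb these identifications on $E$-homology. I would also need to confirm that the spectral sequence genuinely abuts to $E_{t-s}(\coTHH^R(N,C)) = E_{t-s}(\Tot(\mathcal{R}\coTHH^R(N,C)^\bullet))$, which is the standard abutment of the Bousfield–Kan spectral sequence for a Reedy fibrant cosimplicial object; convergence proper is deferred to the separate discussion of convergence conditions mentioned in the introduction. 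The bulk of the argument is therefore a careful diagram-chase identifying the algebraic $E_2$-term, with flatness as the single essential hypothesis making the Künneth comparison go through.
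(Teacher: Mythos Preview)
Your proposal is correct and follows essentially the same route as the paper: construct the Bousfield--Kan spectral sequence from the tower of partial totalizations of $\mathcal{R}(E \wedge \coTHH^R(N,C)^\bullet)$, identify the $E_1$-page as the normalized complex of $E_*$ applied levelwise, and use flatness of $E_*(C)$ over $E_*(R)$ to rewrite each level as $E_*(N) \otimes_{E_*(R)} E_*(C)^{\otimes_{E_*(R)} s}$, recovering the classical coHochschild complex. The only cosmetic difference is that the paper makes your ``K\"unneth-type isomorphism'' explicit by inserting factors of $E \wedge R$ to rewrite $E \wedge N \wedge_R C \wedge_R \cdots \wedge_R C$ as an iterated smash over $E \wedge R$, which is exactly the mechanism behind the K\"unneth comparison you invoke.
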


\begin{proof}
To begin, as in \cite{bohmann2018computational} we will recall the general construction of the Bousfield-Kan spectral sequence \cite{bousfield1972homotopy} for a general Reedy fibrant cosimplicial $R$-module $X^\bullet$.

Let $\Delta$ be the cosimplicial space with the standard $n$-simplex $\Delta^n$ as its $n^{th}$ level.  The category of $R$-modules is cotensored over pointed spaces (see e.g.\ \cite{barnes2020introduction}), and the notation $D^K$ will be used for the cotensor of an $R$-module $D$ with a simplicial space $K$.
So for a Reedy fibrant cosimplicial $R$-module $X^\bullet$ the totalization of $X^\bullet$ is given by:
\[
\Tot(X^\bullet) = eq\big(\prod_{n \ge 0} (X^n)^{\Delta^n} \rightrightarrows \prod_{\alpha \in \Delta([a], [b])} (X^b)^{\Delta^a}\big).
\]

\noindent Let $sk_s\Delta \subset \Delta$ be the cosimplicial subspace with $n^{th}$ level $sk_s\Delta^n$ that is the $s$-skeleton of the $n$-simplex.  Then one can define
\[
\Tot_s(X^\bullet) = eq\big(\prod_{n \ge 0} (X^n)^{sk_s\Delta^n} \rightrightarrows \prod_{\alpha \in \Delta([a], [b])} (X^b)^{sk_s\Delta^a}\big).
\]
The inclusions $sk_s\Delta \hookrightarrow sk_{s+1}\Delta$ then induce a tower of fibrations with fibers $F_i$:
\begin{align*}
    &\cdots \to &Tot_s(X^\bullet) &\xlongrightarrow{p_s} &Tot_{s-1}(X^\bullet) &\xlongrightarrow{p_{s-1}} &Tot_{s-2}(X^\bullet) &\to &\cdots &\xlongrightarrow{p_{1}} &Tot_0(X^\bullet) \cong X^0.\\
    & &\uparrow{i_s} & &\uparrow{i_{s-1}} & &\uparrow{i_{s-2}} & & & &\uparrow{i_0}\\
    & &F_s & &F_{s-1} & &F_{s-2} & & & &F_0
\end{align*}

\noindent We then have an associated exact couple 
\[
\begin{tikzcd}
\pi_*(\Tot_*(X^\bullet)) \arrow{rr}{p_*} && \pi_*(\Tot_*(X^\bullet))  \arrow{dl}{\partial}\\
 & \pi_*(F_*) \arrow{ul}{i_*}
\end{tikzcd}
\]
that yields a 
cohomological spectral sequence $\{E_r, d_r\}$ with differentials 
\[
d_r: E^{s,t}_r \to E^{s+r, t+r-1}_r.
\]

We now want to identify the fiber $F_s$.
Recall that the normalized complex $N^s X^\bullet$ is defined to be:
\begin{align*}
    N^sX^\bullet = \bigcap^{s-1}_{i=0} \text{ker}(\sigma_i: X^s \to X^{s-1})
\end{align*}
for codegeneracy maps $\sigma_i$ as given by the cosimplicial structure. Then each fiber $F_s$ can be identified with
\[
F_s = \Omega^s(N^s X^\bullet).
\]
Thus the $E_1$-term of the spectral sequence is given by
\begin{align*}
    E^{s,t}_1 &= \pi_{t-s}(F_s) \\
    &\cong \pi_{t-s}(\Omega^s (N^s X^\bullet))\\
    &\cong \pi_t(N^s X^\bullet)\\
    &\cong N^s \pi_t(X^\bullet)
\end{align*}
with differential $d_1 : N^s \pi_t(X^\bullet) \to N^{s+1}\pi_t(X^\bullet)$.  This map can then be identified with $\Sigma (-1)^i \pi_t(\delta^i)$ where $\delta^i$ denote the coface maps of the cosimplicial object $X^\bullet$, and we have
\begin{align*}
    &H^*(N^s \pi_t(X^\bullet)) \cong H^s(\pi_t(X^\bullet))\\
    \implies &E^{s,t}_2 \cong H^s(\pi_t(X^\bullet), \Sigma (-1)^i \pi_t(\delta^i)).
\end{align*}

Here we care about the specific case when $X^\bullet = \mathcal{R}(E \wedge \coTHH^R(N, C)^\bullet)$, where $\mathcal{R}$ indicates the Reedy fibrant replacement,
and so
\begin{align*}
    \pi_*(X^\bullet) = \pi_*(\mathcal{R}(E \wedge \coTHH^R(N, C)^\bullet)) \cong \pi_*(E \wedge \coTHH^R(N,C)^\bullet).
\end{align*}  

\noindent Recall that $\coTHH^R(N, C)$ has cosimplicial structure:
\begin{align*}
    &\vdots\\
    N \wedge_R &C \wedge_R C\\
    \uparrow \downarrow &\uparrow \downarrow \uparrow\\
    N &\wedge_R C\\
    \uparrow &\downarrow \uparrow\\
    &N
\end{align*}
so when we take $\pi_*(E \wedge - )$, at the $n^{th}$ level we see that:
\small
\begin{align*}
    \pi_*(E \wedge N \wedge_R C \wedge_R \cdots \wedge_R C)
    &\cong \pi_*(E \wedge N \wedge_{E \wedge R} E \wedge R \wedge_R C \wedge_{E \wedge R}E \wedge R \wedge_R C \cdots \wedge_{E \wedge R} E \wedge R \wedge_R C)\\
    &\cong \pi_*(E \wedge N \wedge_{E \wedge R} E \wedge C \wedge_{E \wedge R} E \wedge C \cdots \wedge_{E \wedge R} E \wedge C)\\
    &\cong \pi_*(E \wedge N) \otimes_{\pi_*(E \wedge R)} \pi_*(E \wedge C ) \otimes_{\pi_*(E \wedge R)} \cdots \otimes_{\pi_*(E \wedge R)} \pi_*(E \wedge C)\\
    &\cong E_*(N) \otimes_{E_*(R)} E_*(C) \otimes_{E_*(R)} \cdots  \otimes_{E_*(R)} E_*(C)
\end{align*}
\normalsize
\noindent where the third isomorphism follows since
    $\pi_*(E \wedge C)$ is flat over $\pi_*(E \wedge R)$ by hypothesis, and therefore
\begin{align*}
    \pi_* \mathcal{R}(E \wedge \coTHH^R(N,C)^n) &\cong \pi_* (E \wedge \coTHH^R(N,C)^n)\\
    &\cong E_*(N) \otimes_{E_*(R)} E_*(C)^{\otimes_{E_*(R)}n}.
\end{align*}
Then $\Sigma(-1)^i \pi_*(\delta^i)$ gives the coHochschild differential under this identification, and thus we get the coHochschild complex:
\begin{align*}
    E^{s,t}_2 &\cong H^s(\pi_t(X^\bullet), \Sigma (-1)^i \pi_t(\delta^i))\\
    &\cong H^s(E_t(N) \otimes_{E_*(R)} E_t(C)^{\otimes_{E_*(R)}n}, \Sigma (-1)^i \pi_t(\delta^i))\\
    &\cong \coHH^{E_*(R)}_{s,t}(E_*(N), E_*(C))
\end{align*}
Therefore the result is the Bousfield-Kan spectral sequence with $E_2$-page
\[
E_2^{s,t} = \coHH^{E_*(R)}_{s,t}(E_*(N), E_*(C))
\]
abutting to $E_{t-s}(\coTHH^R(N,C))$.
\end{proof}

Because it will be particularly useful in future examples, we state the following special case when $E=\mathbb{S}$
as a corollary:
\hypertarget{corollary}{}
\begin{corollary}\label{Cor-pi}
Let $R$ be a commutative ring spectrum and $C$ an $R$-coalgebra spectrum.
If $\pi_*(C)$ is flat over $\pi_*(R)$, then there exists a Bousfield-Kan spectral sequence that abuts to $\pi_{t-s}(\coTHH^R(C))$ with $E_2$-page
\[
E_2^{s,t} = \coHH^{\pi_*(R)}_{s,t}(\pi_*(C))
\]
given by the classical coHochschild homology of $\pi_*(C)$.
\end{corollary}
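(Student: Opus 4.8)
The plan is to obtain this statement as a direct specialization of the relative coB\"okstedt spectral sequence theorem established above, taking $E = \mathbb{S}$ and letting $N = C$ be regarded as a bicomodule over itself. The key observation is that for the sphere spectrum, smashing with $\mathbb{S}$ is (up to weak equivalence) the identity functor, so the homology theory represented by $\mathbb{S}$ is nothing but the homotopy groups: for any spectrum $X$ one has $\mathbb{S}_*(X) = \pi_*(\mathbb{S} \wedge X) \cong \pi_*(X)$. In particular $\mathbb{S}_*(C) \cong \pi_*(C)$ and $\mathbb{S}_*(R) \cong \pi_*(R)$.

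First I would check that the hypotheses of the theorem are met under this substitution. The flatness requirement that $E_*(C)$ be flat over $E_*(R)$ becomes, after the identification above, precisely the hypothesis that $\pi_*(C)$ be flat over $\pi_*(R)$, which is what the corollary assumes. The theorem also requires $C$ to be cofibrant as an $R$-module; since cofibrant replacement as an $R$-module leaves both the homotopy groups and the homotopy type of $\coTHH^R(C)$ unchanged, we may assume this without loss of generality, or simply regard it as a standing hypothesis.

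Next I would invoke the theorem verbatim. With $E = \mathbb{S}$ and $N = C$ it produces a Bousfield-Kan spectral sequence for the cosimplicial $R$-module $\coTHH^R(C)^\bullet$ abutting to $\mathbb{S}_{t-s}(\coTHH^R(C)) \cong \pi_{t-s}(\coTHH^R(C))$, with $E_2$-page
\[
E_2^{s,t} = \coHH^{\mathbb{S}_*(R)}_{s,t}(\mathbb{S}_*(C), \mathbb{S}_*(C)) \cong \coHH^{\pi_*(R)}_{s,t}(\pi_*(C), \pi_*(C)).
\]
By the notational convention recorded earlier, in which coefficients taken in $C$ itself are suppressed from the notation, the right-hand side is exactly $\coHH^{\pi_*(R)}_{s,t}(\pi_*(C))$, the classical coHochschild homology of the $\pi_*(R)$-coalgebra $\pi_*(C)$.

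There is no genuine obstacle here: the entire content is already contained in the preceding theorem, and the only points requiring care are the identification $\mathbb{S}_*(-) \cong \pi_*(-)$ coming from $\mathbb{S}$ being the monoidal unit, and the translation of notation for coefficients taken in $C$ itself. Accordingly, I expect the author's proof to amount to a single remark that the corollary is the $E = \mathbb{S}$, $N = C$ case of the theorem.
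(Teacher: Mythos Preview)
Your proposal is correct and matches the paper's approach exactly: the paper simply introduces this corollary as the special case $E = \mathbb{S}$ of the preceding theorem, without giving a separate proof. Your remarks identifying $\mathbb{S}_*(-) \cong \pi_*(-)$ and taking $N = C$ are precisely the substitutions the paper has in mind.
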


Since the Bousfield-Kan spectral sequence does not converge in general, we must specify the conditions required for convergence.  Based on work of Bousfield-Kan \cite{bousfield1972homotopy} and  Bohmann-Gerhardt-H{\o}genhaven-Shipley-Ziegenhagen \cite{bohmann2018computational}, we have the following convergence result.  

\begin{proposition}
If for every $s$ there exists some $r$ such that $E^{s,s+i}_r = E^{s,s+i}_\infty$, then the relative coB\"okstedt spectral sequence for $\coTHH^R(C)$ converges completely to
\[
\pi_* \text{Tot} \mathcal{R}(E \wedge \coTHH^R(C)^\bullet).
\]
\end{proposition}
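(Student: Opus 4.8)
The plan is to deduce this from the general convergence theory for the homotopy spectral sequence of a tower of fibrations, specializing the Bousfield-Kan results \cite{bousfield1972homotopy} exactly as in \cite{bohmann2018computational}. Recall from the construction above that the relative coB\"okstedt spectral sequence is the homotopy spectral sequence of the tower of fibrations
\[
\cdots \to \Tot_s(X^\bullet) \xrightarrow{p_s} \Tot_{s-1}(X^\bullet) \to \cdots \to \Tot_0(X^\bullet),
\]
where $X^\bullet = \mathcal{R}(E \wedge \coTHH^R(C)^\bullet)$ and $\Tot(X^\bullet) = \lim_s \Tot_s(X^\bullet)$ is the homotopy limit of this tower. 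Complete convergence is therefore a statement about how the homotopy groups of $\Tot(X^\bullet)$ are recovered from the homotopy groups of the stages $\Tot_s(X^\bullet)$.

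First I would record the Milnor-type short exact sequence associated to the tower,
\[
0 \to \lim^1_s \pi_{n+1}(\Tot_s X^\bullet) \to \pi_n(\Tot X^\bullet) \to \lim_s \pi_n(\Tot_s X^\bullet) \to 0,
\]
together with the decreasing filtration $F^s \pi_n(\Tot X^\bullet) = \ker\big(\pi_n(\Tot X^\bullet) \to \pi_n(\Tot_{s-1} X^\bullet)\big)$. Complete convergence amounts to two things: that this filtration is complete, Hausdorff, and exhaustive with associated graded isomorphic to $E_\infty^{s,s+n}$, and that the $\lim^1$-term vanishes so that $\pi_n(\Tot X^\bullet) \cong \lim_s \pi_n(\Tot_s X^\bullet)$. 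The core of the argument is to show that the hypothesis---that for every $s$ the entry $E^{s,s+i}_r$ is independent of $r$ once $r$ is large---is precisely the Mittag-Leffler condition needed to force both conclusions.

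The key step is the translation of pointwise stabilization into control of the tower $\{\pi_n(\Tot_s X^\bullet)\}_s$. The successive quotients of the filtration are computed by the $E_\infty$-terms, so the hypothesis that each $E^{s,s+i}_r$ stabilizes at a finite page rules out infinite families of nonzero differentials entering or leaving a fixed position. This means that along each stem the images of the maps $p_s$ eventually stabilize, which is exactly the Mittag-Leffler condition; consequently $\lim^1_s \pi_{n+1}(\Tot_s X^\bullet) = 0$ and the filtration is complete and Hausdorff. Applying the Bousfield-Kan convergence criterion \cite[IX.5]{bousfield1972homotopy} then yields complete convergence to $\pi_*(\Tot \mathcal{R}(E \wedge \coTHH^R(C)^\bullet))$.

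The main obstacle is the bookkeeping in the previous paragraph: because the spectral sequence is cohomologically graded, with differentials $d_r \colon E^{s,t}_r \to E^{s+r,t+r-1}_r$, and is not confined to a single quadrant, one cannot invoke the automatic finite-stage vanishing that makes first-quadrant spectral sequences converge for free. The explicit stabilization hypothesis is introduced precisely to replace that vanishing, and the delicate point is to verify that it yields Mittag-Leffler for the homotopy tower rather than merely for the $E_r$-pages. Once this is established, the remainder is formal and identical to the absolute case treated in \cite{bohmann2018computational}, since the tower of fibrations and its exact couple are constructed in the same way.
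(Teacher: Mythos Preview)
Your proposal is correct and follows the same route as the paper: the paper gives no proof beyond stating that the result is ``based on work of Bousfield--Kan \cite{bousfield1972homotopy} and Bohmann--Gerhardt--H{\o}genhaven--Shipley--Ziegenhagen \cite{bohmann2018computational}'' and pointing to Goerss--Jardine \cite{goerss2009simplicial} for complete convergence criteria, so your sketch via the tower of fibrations and the Bousfield--Kan convergence criterion in \cite[IX.5]{bousfield1972homotopy} is exactly the intended argument. One minor remark: the cleanest form of the Bousfield--Kan criterion is that complete convergence holds when $\varprojlim^1_r E_r^{s,t} = 0$ for all $(s,t)$, which is immediate from the stabilization hypothesis; your detour through Mittag--Leffler for the tower $\{\pi_n(\Tot_s X^\bullet)\}$ is equivalent but slightly less direct.
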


\noindent Conditions for complete convergence can be found in Goerss-Jardine \cite{goerss2009simplicial}.  Further, from the natural construction of a map $\text{Hom}(X, Y) \wedge Z \to \text{Hom}(X, Y \wedge Z)$ we get a natural map 
\[
P: E \wedge \text{Tot}(\mathcal{R}\coTHH^R(C)^\bullet) \to \text{Tot} \mathcal{R}(E \wedge \coTHH^R(C)^\bullet),
\]
giving us the following corollary.

\begin{corollary}
If for every $s$ there exists some $r$ such that $E^{s,s+i}_r = E^{s,s+i}_\infty$ and $P: E \wedge  \text{Tot}(\mathcal{R}\coTHH^R(C)^\bullet) \to \text{Tot} \mathcal{R}(E \wedge \coTHH^R(C)^\bullet)$ induces an isomorphism in homotopy, then the relative coB\"okstedt spectral sequence for $\coTHH^R(C)$ converges completely to $E_*(\coTHH^R(C))$.
\end{corollary}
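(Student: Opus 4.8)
The plan is to deduce this directly from the preceding Proposition together with the added hypothesis on $P$. Under the column-stabilization condition that $E^{s,s+i}_r = E^{s,s+i}_\infty$ for some $r$, the Proposition already guarantees that the relative coB\"okstedt spectral sequence converges completely to $\pi_* \text{Tot}\,\mathcal{R}(E \wedge \coTHH^R(C)^\bullet)$, which is precisely the target of the map $P$. It therefore suffices to identify this target group with $E_*(\coTHH^R(C))$, and the entire content of the corollary is this identification.

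First I would unwind the definitions on the source of $P$. By the definition of topological coHochschild homology, $\coTHH^R(C) = \text{Tot}(\mathcal{R}\coTHH^R(C)^\bullet)$, so the source of $P$ is exactly $E \wedge \coTHH^R(C)$, and hence $\pi_*\bigl(E \wedge \text{Tot}(\mathcal{R}\coTHH^R(C)^\bullet)\bigr) \cong E_*(\coTHH^R(C))$ by the definition of the homology theory $E_*$. The second hypothesis asserts that $P$ induces an isomorphism in homotopy, so $P_*$ provides an isomorphism
\[
E_*(\coTHH^R(C)) \xrightarrow{\ \cong\ } \pi_* \text{Tot}\,\mathcal{R}(E \wedge \coTHH^R(C)^\bullet).
\]
Transporting the complete convergence of the Proposition along this isomorphism then yields the claim: the Bousfield--Kan filtration lives on the tower of fibrations computing the right-hand side, $P$ is a natural map of $R$-modules, and so identifying the two abutment groups via $P_*$ carries the complete-convergence statement over to $E_*(\coTHH^R(C))$.

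I do not expect a genuine obstacle, as this is a formal corollary whose two substantive inputs — complete convergence and the isomorphism $P_*$ — are both supplied by hypothesis. The only point requiring minor care is checking that complete convergence is preserved under the identification of abutments, i.e.\ that ``converges completely to $A$'' together with a natural isomorphism $A \cong B$ yields ``converges completely to $B$''; this is immediate once one observes that $P$ is natural, so the identification of abutment groups is coherent with the tower of fibrations and its induced filtration.
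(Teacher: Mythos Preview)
Your proposal is correct and is exactly the argument the paper has in mind: the paper does not write out a proof of this corollary, presenting it as an immediate consequence of the preceding Proposition together with the assumed isomorphism induced by $P$, and your write-up simply makes that deduction explicit.
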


For the computations in this paper we take $E=\mathbb{S}$, and so the condition on the map $P$ is satisfied \cite{bohmann2018computational}.
We formally state this specific case here for easy reference:

\hypertarget{converge}{}
\begin{corollary}\label{converge}
When considering $E=\mathbb{S}$, if for every $s$ there exists some $r$ so that $E^{s,s+i}_r = E^{s,s+i}_\infty$ then the relative coB\"okstedt spectral sequence converges completely to $\pi_*(\coTHH^R(C))$.
\end{corollary}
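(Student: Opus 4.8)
The plan is to obtain this statement as the specialization $E=\mathbb{S}$ of the preceding general corollary, so that essentially no new argument is needed: I would check that each of that corollary's two hypotheses becomes, respectively, the stated condition on the $E_r$-pages and a condition that holds automatically, and that its abutment simplifies to $\pi_*(\coTHH^R(C))$. First I would rewrite the target. For $E=\mathbb{S}$ the sphere spectrum is the unit for the smash product, so at each cosimplicial level $\mathbb{S}\wedge\coTHH^R(C)^\bullet\simeq\coTHH^R(C)^\bullet$, and hence $\mathbb{S}_*(\coTHH^R(C))=\pi_*(\mathbb{S}\wedge\coTHH^R(C))\cong\pi_*(\coTHH^R(C))$. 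This is exactly the abutment claimed.

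The step I expect to require the most care is verifying that the hypothesis on the natural map $P$ in the general corollary is automatic when $E=\mathbb{S}$. Recall that $P\colon \mathbb{S}\wedge\text{Tot}(\mathcal{R}\coTHH^R(C)^\bullet)\to\text{Tot}\,\mathcal{R}(\mathbb{S}\wedge\coTHH^R(C)^\bullet)$ is assembled from the maps $\text{Hom}(X,Y)\wedge Z\to\text{Hom}(X,Y\wedge Z)$, and for a general $E$ the functor $E\wedge(-)$ need not commute with the homotopy limit defining $\text{Tot}$, which is why the general corollary must impose the $P$-isomorphism as a hypothesis. For $E=\mathbb{S}$, however, $\mathbb{S}\wedge(-)$ is naturally equivalent to the identity functor on spectra; under this equivalence both the source and the target of $P$ are identified with $\text{Tot}(\mathcal{R}\coTHH^R(C)^\bullet)=\coTHH^R(C)$, and $P$ itself corresponds to the identity. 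Thus $P$ induces an isomorphism on homotopy. I would invoke \cite{bohmann2018computational} for this observation, while noting explicitly that the identification $\mathbb{S}\wedge(-)\simeq\mathrm{id}$ is what bypasses the general failure of smashing to commute with totalization.

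With the abutment rewritten and the $P$-hypothesis confirmed, the only remaining hypothesis of the general corollary—that for every $s$ there exists some $r$ with $E^{s,s+i}_r=E^{s,s+i}_\infty$—is carried over verbatim, so its conclusion of complete convergence to $\mathbb{S}_*(\coTHH^R(C))\cong\pi_*(\coTHH^R(C))$ yields the claim. I do not anticipate any genuine obstacle beyond writing down the justification that $P$ is an equivalence at $E=\mathbb{S}$; the remainder is a formal specialization of results already established above.
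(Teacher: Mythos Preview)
Your proposal is correct and matches the paper's approach exactly: the paper presents this corollary as the immediate specialization $E=\mathbb{S}$ of the preceding corollary, noting (with a citation to \cite{bohmann2018computational}) that the condition on the map $P$ is automatically satisfied in this case. Your explicit justification that $\mathbb{S}\wedge(-)\simeq\mathrm{id}$ makes $P$ an equivalence is slightly more detailed than what the paper writes, but the argument is the same.
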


\section{Algebraic structures in the (relative) (co)B\"okstedt spectral sequence}

    We will need to understand additional algebraic structure of the relative co\-B\"okstedt spectral sequence in order to facilitate calculations in the next section.  By work of Angeltveit-Rognes, the classical B\"okstedt spectral sequence for a commutative ring spectrum has the structure of a spectral sequence of Hopf algebras under certain flatness conditions \cite{angeltveit2005hopf}:  

\begin{theorem}[{\cite[Thm 4.5]{angeltveit2005hopf}}]
If $A$ is a commutative ring spectrum, then:
\begin{enumerate}
    \item If $H_*(\THH(A);\mathbb{F}_p)$ is flat over $H_*(A; \mathbb{F}_p)$, then there is a coproduct 
    \[
    \psi: H_*(\THH(A);\mathbb{F}_p) \to H_*(\THH(A);\mathbb{F}_p)  \otimes_{H_*(A;\mathbb{F}_p)} H_*(\THH(A);\mathbb{F}_p)
    \]
    and $H_*(\THH(A);\mathbb{F}_p)$ is an $\mathcal{A}_*$-comodule $H_*(A; \mathbb{F}_p)$-Hopf algebra, where $\mathcal{A}_*$ is the dual Steenrod algebra.
    \item If each term $E^r_{*,*}(A)$ for $r \ge 2$ is flat over $H_*(A; \mathbb{F}_p)$, then there is a coproduct
    \[
    \psi: E^r_{*,*}(A) \to E^r_{*,*}(A) \otimes_{H_*(A; \mathbb{F}_p)} E^r_{*,*}(A)
    \]
    and $E^r_{*,*}(A)$ is an $\mathcal{A}_*$-comodule $H_*(A; \mathbb{F}_p)$-Hopf algebra spectral sequence.  In particular, the differentials $d^r$ respect the coproduct $\psi$.
\end{enumerate}
\end{theorem}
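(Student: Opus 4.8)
The plan is to produce all of the relevant structure from the (co)group structure of the circle, using the standard identification $\THH(A) \simeq A \otimes S^1$ of the topological Hochschild homology of a commutative ring spectrum with the Loday construction on the simplicial circle. Under this identification the commutative $A$-algebra structure on $\THH(A)$ is automatic, since $A \otimes (-)$ lands in commutative $A$-algebras; this already makes the B\"okstedt spectral sequence a spectral sequence of commutative $H_*(A;\mathbb{F}_p)$-algebras. The additional coalgebra structure comes from the pinch map $S^1 \to S^1 \vee S^1$, which exhibits the circle as a cogroup object with counit the collapse $S^1 \to \ast$ and antipode the reflection. Because $A \otimes (-)$ is a left adjoint and $A \otimes \ast \simeq A$, it carries the wedge (a pushout over a point) to the relative smash product, so the pinch map induces a coproduct
\[
\Psi: \THH(A) \to \THH(A) \wedge_A \THH(A).
\]
The cogroup axioms for $S^1$ then translate into coassociativity, counitality, and the antipode relation for $\Psi$, while compatibility of $\Psi$ with the product holds because $\Psi$, being obtained by applying $A \otimes (-)$, is itself a map of commutative $A$-algebras.

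For part (1), I would apply $H_*(-;\mathbb{F}_p)$ to $\Psi$. The flatness hypothesis lets me invoke the K\"unneth isomorphism $H_*(\THH(A) \wedge_A \THH(A);\mathbb{F}_p) \cong H_*(\THH(A);\mathbb{F}_p) \otimes_{H_*(A;\mathbb{F}_p)} H_*(\THH(A);\mathbb{F}_p)$, so that $H_*(\Psi)$ becomes the desired coproduct $\psi$. All of the Hopf algebra axioms are inherited from the spectrum-level axioms for $\Psi$ together with the commutative $A$-algebra structure. The $\mathcal{A}_*$-comodule structure is automatic: for any spectrum $X$, $H_*(X;\mathbb{F}_p)$ is naturally a comodule over the dual Steenrod algebra, and by naturality each of the maps defining the product, coproduct, unit, counit, and antipode is a map of $\mathcal{A}_*$-comodules, so $H_*(\THH(A);\mathbb{F}_p)$ is an $\mathcal{A}_*$-comodule $H_*(A;\mathbb{F}_p)$-Hopf algebra.

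For part (2), the point is to promote $\Psi$ to a map of B\"okstedt spectral sequences. The B\"okstedt spectral sequence arises from the skeletal filtration of the simplicial spectrum $[q] \mapsto A^{\wedge(q+1)}$ realizing $\THH(A)$, so I must arrange the coproduct to be compatible with this filtration. The honest way to do this is to model the pinch map simplicially via edgewise subdivision of the simplicial circle, so that $\Psi$ is realized by a filtration-preserving map of subdivided simplicial spectra; the subdivision changes neither the homotopy type nor the associated spectral sequence. This makes $\Psi$ a morphism of spectral sequences, and under the flatness hypothesis the target spectral sequence for $\THH(A) \wedge_A \THH(A)$ is identified with $E^r_{*,*}(A) \otimes_{H_*(A;\mathbb{F}_p)} E^r_{*,*}(A)$ via the multiplicativity of the filtration and K\"unneth on each page. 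The resulting coproduct on each $E^r_{*,*}(A)$ makes it an $\mathcal{A}_*$-comodule $H_*(A;\mathbb{F}_p)$-Hopf algebra, and because $\Psi$ is a map of spectral sequences it commutes with the differentials; this is exactly the assertion that each $d^r$ respects $\psi$. Passing to $E^\infty$ recovers the coproduct of part (1), confirming compatibility.

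The main obstacle is the filtration-compatibility in part (2): the pinch map is not literally simplicial for the minimal model of the circle, so the edgewise-subdivision argument must be executed carefully to guarantee that $\Psi$ simultaneously preserves the skeletal filtration and induces $\psi$ on the abutment. A secondary technical point is that the flatness hypotheses must be checked at two levels — on homology for part (1) and on each spectral sequence page for part (2) — in order to apply K\"unneth and to identify the target of the coproduct with a genuine tensor product; without these one obtains only a lax coproduct into $H_*(\THH(A) \wedge_A \THH(A);\mathbb{F}_p)$ rather than an honest $\mathcal{A}_*$-comodule $H_*(A;\mathbb{F}_p)$-Hopf algebra structure.
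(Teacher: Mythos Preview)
The paper does not prove this theorem: it is quoted verbatim as \cite[Thm~4.5]{angeltveit2005hopf} and used only as motivation and background for the analogous $\square$-Hopf algebra structure on the coB\"okstedt spectral sequence. So there is no ``paper's own proof'' to compare against.

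That said, your proposal is essentially the Angeltveit--Rognes argument. They too identify $\THH(A)$ with the Loday construction $A\otimes S^1$, obtain the coproduct from the pinch map on the circle, and deal with the failure of the pinch map to be simplicial on the minimal model of $S^1$ by passing to a subdivision so that the map becomes filtration-preserving and hence induces a map of B\"okstedt spectral sequences. Your identification of the two technical pressure points---the subdivision needed for filtration compatibility, and the two separate flatness hypotheses needed to invoke K\"unneth on the abutment and on each page---matches exactly what drives their proof. The only thing you might tighten is the argument that the spectral sequence for $\THH(A)\wedge_A \THH(A)$ really is the tensor square over $H_*(A;\mathbb{F}_p)$ of the B\"okstedt spectral sequence page by page: this uses not just flatness on each page but also that the skeletal filtrations on the two factors induce the correct filtration on the smash product, which Angeltveit--Rognes handle via an explicit bisimplicial/diagonal comparison.
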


In order to understand the implications of this spectral sequence structure in the setting of Angeltveit-Rognes \cite{angeltveit2005hopf}, we recall the following definitions of indecomposable and primitive elements.
\begin{definition}
    For an augmented algebra $A$ over a commutative ring $R$ with augmentation $\epsilon: A \to R$, the \textbf{indecomposable elements} of $A$, denoted by the $R$-module $QA$, are given by the short exact sequence
    \[
    IA \otimes IA \xlongrightarrow{\mu} IA \xlongrightarrow{} QA \xlongrightarrow{} 0
    \]
    for multiplication map $\mu$ and $IA = \text{ker}(\epsilon)$.
\end{definition}

\begin{definition}
    For a coaugmented coalgebra $C$ over a commutative ring $R$ with coaugmentation $\eta: R \to C$ and counit $\epsilon: C \to R$, the \textbf{primitive elements} of $C$, denoted by the $R$-module $PC$, are given by the short exact sequence
    \[
    0 \xlongrightarrow{} PC \xlongrightarrow{} JC \xlongrightarrow{\Delta} JC \otimes JC
    \]
    for comultiplication map $\Delta$ and $JC = \text{coker}(\eta)$. An element $x \in \text{ker}(\epsilon)$ is primitive if its image under the quotient by $Im(\eta)$ in $JC$ is in $PC$.
\end{definition}

\begin{remark}
In a coaugmented coalgebra $C$, it is equivalent to say that $x$ is primitive if $\Delta(x)=1 \otimes x + x \otimes 1$.  Note that this formulation is equivalent to the above definition because the coproduct on $x \in IC=\text{ker}(\epsilon)$ is given by
\begin{align*}
    \Delta(x) = 1 \otimes x + x \otimes 1 + \Sigma_i x'_{i} \otimes x''_{i}.
\end{align*}
Since $C$ is coaugmented, it splits as $R \oplus IC$, which means that 
\begin{align*}
    C \otimes C = (R \otimes R) \oplus (IC \otimes R) \oplus (R \otimes IC) \oplus (IC \otimes IC).
\end{align*}
Because $C$ is counital,
\[
\text{Id} = (\epsilon \otimes \text{Id}) \circ \Delta = (\text{Id} \otimes \epsilon) \circ \Delta,
\]
so $\Sigma_i x'_{i} \otimes x''_{i} \in IC \otimes IC$. But
\begin{align*}
    \eta: R &\xlongrightarrow{} C \cong R \oplus IC\\
    r &\mapsto (r,0)
\end{align*}
has cokernel $JC \cong IC$, so for primitive $x \in IC \cong JC$,
\begin{align*}
    0 \xlongrightarrow{} PC &\xlongrightarrow{} JC \xlongrightarrow{\Delta} JC \otimes JC\\
    x &\mapsto x \mapsto 0
\end{align*}
means that $\Sigma_i x'_{i} \otimes x''_{i} \in JC \otimes JC$ must be zero, and so $\Delta(x)=1 \otimes x+x \otimes 1$ as desired.
\end{remark}

If we apply these definitions to our examples of coalgebras from above, we have the following indecomposable and primitive elements, which will be useful for the calculations in the final section of this paper.

\begin{example}
Indecomposable elements in the polynomial algebra $k[w_1, w_2, \ldots]$ are classes of the form $w_i$.
The augmentation in this case is
\begin{align*}
    \epsilon: k[w_1, w_2, \ldots] &\to k\\
    w_i &\mapsto 0
\end{align*}
so $IA= \text{ker}(\epsilon) = (w_1, w_2, \ldots)$. So then the image of the product on $IA$ will be terms of the form $w_i^{m_i} \ldots w_j^{m_j}$ for $\Sigma m_k > 1$, which means $QA$ is given by elements of the form $w_i$.
\end{example}

\begin{example}
Similarly, in the exterior algebra $\Lambda_k(y_1, y_2, \ldots)$, indecomposable elements are classes of the form $y_i$. The augmentation is given by
\begin{align*}
    \epsilon: \Lambda_k(y_1, y_2, \ldots) &\to k\\
    y_i &\mapsto 0
\end{align*}
so $IA= \text{ker}(\epsilon) = (y_1, y_2, \ldots)$. The image of the product on $IA$ will be terms of the form $y_{i_1} y_{i_2} \ldots y_{i_n}$ for $n>1$, which means $QA$ is given by elements of the form $y_i$.
\end{example}

\begin{example}
Primitive elements in the polynomial coalgebra $k[w_1, w_2, \ldots]$ are classes of the form $w_i^{p^m}$ for $p = char(k)$.  The coaugmentation
\begin{align*}
    \eta: k &\xlongrightarrow{} k[w_1, w_2, \ldots]\\
    1 &\mapsto 1
\end{align*}
has cokernel $JC$ with basis $\{w_1^{j_1} w_2^{j_2} \ldots\}$ for all $j_i \ge 0$, with at least one nonzero $j_i$. 
Recall the comultiplication is given by
\begin{align*}
    \Delta(w_i^j) &= \sum_k \binom{j}{k} w_i^k \otimes w_i^{j-k}
\end{align*}
Since $p$ is the characteristic of $k$, 
\begin{align*}
    \Delta(w_i^{p^m}) = 1 \otimes w_i^{p^m} + w_i^{p^m} \otimes 1
\end{align*}
so $w_i^{p^m}$ is primitive.
The other $w_i^n$ are not primitive because $\Delta(w_i^n) \ne 1 \otimes w_i^n + w_i^n \otimes 1$ since those binomial coefficients do not vanish.
\end{example}
\begin{example}
In the exterior coalgebra $\Lambda_k(y_1, y_2, \ldots)$, primitive elements are classes of the form $y_i$. Recall that the coproduct on $\Lambda_k(y_1, y_2, \ldots)$ is given by
$\Delta(y_i)=1\otimes y_i + y_i \otimes 1$
and therefore the those terms are primitive.
\end{example}
\begin{example}
Primitive elements in the divided power coalgebra $\Gamma_k[x_1, x_2, \ldots]$ are classes of the form $x_i$. 
Recall that the divided power coalgebra has comultiplication
\begin{align*}
    \Delta(\gamma_j (x_i)) &= \sum_{a+b=j} \gamma_a(x_i) \otimes \gamma_b(x_i)
\end{align*}
So since $\gamma_0(x_i) = 1$ and $\gamma_1(x_i)=x_i$, we have
\begin{align*}
    \Delta(x_i) = 1 \otimes x_i + x_i \otimes 1.
\end{align*}
The other $\gamma_j(x_i)$ for $j>1$ are not primitive because their image under $\Delta$ will have additional $\gamma_a(x_i) \otimes \gamma_b(x_i)$ terms.
\end{example}

Studying primitive and indecomposable elements can be particularly useful because of results like the following from Angeltveit and Rognes:

\begin{theorem}[{\cite[Prop 4.8]{angeltveit2005hopf}}]

Let $A$ be a commutative $\mathbb{S}$-algebra with $H_*(A; k)$ connected and such that $\HH_*(H_*(A; k))$ is flat over $H_*(A; k)$.  Then the $E^2$-term of the B\"okstedt spectral sequence
\[
E^{2}_{*,*}(A) = \HH_*(H_*(A; k))
\] is an
$H_*(A; k)$-Hopf algebra, and a shortest non-zero differential $d^r_{s,t}$ in lowest total degree $s+t$, if one exists, must map from an algebra indecomposable to a coalgebra primitive
in $\HH_*(H_*(A; k))$.
\end{theorem}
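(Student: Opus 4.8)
The plan is to obtain the first assertion from the structure theorem already recorded and to prove the statement about the shortest differential by the standard derivation/coderivation argument for a spectral sequence of Hopf algebras. For the first part, since $\HH_*(H_*(A;k))$ is flat over $H_*(A;k)$, part (2) of the Angeltveit--Rognes structure theorem stated above, applied at $r=2$, identifies $E^2_{*,*}(A) \cong \HH_*(H_*(A;k))$ together with an $H_*(A;k)$-Hopf algebra structure; this flatness is precisely what makes the coproduct $\psi\colon E^2 \to E^2 \otimes_{H_*(A;k)} E^2$ well defined. That same theorem records the only three facts I will use about the differentials: each $d^r$ is a derivation for the product, it is a coderivation for the coproduct, and the unit is a permanent cycle so that $d^r(1)=0$.

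For the structural statement, let $x$ support a nonzero differential of lowest total degree $s+t$, and let $r$ be smallest with $d^r(x)\neq 0$. Because this is the shortest such differential in the lowest total degree, the earlier differentials vanish in total degrees up to that of $x$, so $E^2$ through $E^r$ agree there and $E^r$ inherits the flat $H_*(A;k)$-Hopf algebra structure in the relevant range. First I would show $x$ is an algebra indecomposable. If not, then in $E^r$ we may write $x=\sum_j y_j z_j$ with $y_j,z_j$ in the augmentation ideal; since $H_*(A;k)$ is connected the augmentation ideal lies in strictly positive degrees, so each $y_j,z_j$ has total degree strictly below that of $x$. Minimality of the total degree of $x$ forces $d^r(y_j)=d^r(z_j)=0$, and the Leibniz rule then gives $d^r(x)=\sum_j \big( d^r(y_j)\,z_j \pm y_j\,d^r(z_j)\big)=0$, a contradiction, so $x$ is indecomposable.

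Next I would show $d^r(x)$ is a coalgebra primitive. Writing $\psi(x)=1\otimes x + x\otimes 1 + \sum_i x'_i\otimes x''_i$ with reduced part in $IE^r \otimes_{H_*(A;k)} IE^r$, connectivity again forces every $x'_i,x''_i$ to have total degree strictly below that of $x$, hence $d^r(x'_i)=d^r(x''_i)=0$ by minimality. Applying the coderivation identity $\psi\circ d^r=(d^r\otimes 1 \pm 1\otimes d^r)\circ\psi$ and using $d^r(1)=0$, every contribution from the reduced part vanishes and one is left with $\psi(d^r x)=1\otimes d^r(x)+d^r(x)\otimes 1$, which is exactly the statement that $d^r(x)$ is primitive.

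The main obstacle is not the degree bookkeeping, which is formal once the grading is in place, but rather securing the full spectral sequence of Hopf algebras structure in the relative setting: one must know that $d^r$ respects both the product and the coproduct over the base $H_*(A;k)$, and that the reduced coproduct genuinely factors through $IE^r \otimes_{H_*(A;k)} IE^r$ with both tensor factors of strictly lower total degree. The first point is exactly what the cited structure theorem provides under the flatness hypothesis, and the second is where connectivity of $H_*(A;k)$ is indispensable, since without it the augmentation ideal could meet degree zero and the minimality argument would collapse. With these two inputs established, the remaining derivation and coderivation computations above are purely formal.
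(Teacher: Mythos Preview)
The paper does not give its own proof of this statement; it is quoted from Angeltveit--Rognes as \cite[Prop.~4.8]{angeltveit2005hopf} and used as input. (The dual analogue for the relative coB\"okstedt spectral sequence, Theorem~\ref{shortest}, is likewise stated without proof, with only the remark that the argument follows as in \cite{bohmanngerhardtshipley}.) Your proposal is precisely the standard Angeltveit--Rognes argument: invoke the spectral-sequence-of-Hopf-algebras structure from \cite[Thm.~4.5]{angeltveit2005hopf} so that $d^r$ is both an $H_*(A;k)$-derivation and coderivation, and then use minimality of the total degree together with connectivity of $H_*(A;k)$ to make the Leibniz and co-Leibniz expansions collapse. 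So there is nothing to compare against in this paper, and your outline matches the original source.

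One point to tighten. You cite part (2) of the structure theorem ``at $r=2$'', but that part is stated under the hypothesis that \emph{every} $E^r$ is flat over $H_*(A;k)$, whereas the theorem you are proving assumes only flatness of $E^2=\HH_*(H_*(A;k))$. Your remedy, that ``$E^2$ through $E^r$ agree in the relevant range'', is the right idea but needs to be said carefully: minimality of the total degree of $x$ guarantees no differentials originate in lower total degree, hence decomposing $x=\sum y_jz_j$ or $\psi(x)=1\otimes x+x\otimes 1+\sum x'_i\otimes x''_i$ in $E^2$ already produces permanent cycles $y_j,z_j,x'_i,x''_i$ through page $r$, and one applies the derivation and coderivation identities to those representatives. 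This is how Angeltveit--Rognes avoid asserting a global Hopf algebra structure on $E^r$ in degrees they do not control. With that adjustment your argument is complete.
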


Bohmann-Gerhardt-Shipley show that under appropriate coflatness conditions the coB\"okstedt spectral sequence for a cocommutative coalgebra spectrum has what is called a \textit{$\square$-Hopf algebra structure}, an analog of a Hopf algebra structure working over a coalgebra \cite{bohmanngerhardtshipley}, where the $\square$ in this notation is the cotensor product. Recall that
for an $R$-coalgebra $C$, a right $C$-comodule $M$ with $\gamma: M \to M \otimes C$, and a left $C$-comodule $N$ with $\psi: N \to C \otimes N$, 
the \textit{cotensor} of $M$ and $N$ over $C$ is defined to be the following equalizer in $R$-modules:

\centerline{
\xymatrix@=2cm{
& M \square_C N = eq\bigg((M \otimes_R N) \ar@<.5ex>[r]^{ \hspace{1cm}\gamma \otimes \text{Id}_N} \ar@<-.5ex>[r]_{ \hspace{1cm}\text{Id}_M \otimes \psi}  &M \otimes_R C \otimes_R N\bigg).
}
}
Note that the cotensor does not always yield a $C$-comodule, but under some conditions it does.  In particular, if $C$ is a coalgebra over a field and $M$ and $N$ are $C$-bicomodules, then $M \square_C N$ is a $C$-bicomodule. 

In order to define a $\square_C$-Hopf algebra for a coalgebra $C$ over a field $k$, we first recall the definitions of a $\square_C$-algebra, a $\square_C$-coalgebra, and a $\square_C$-bialgebra from \cite{bohmanngerhardtshipley}. See Definition 2.10, Definition 2.11, Definition 2.12, and Definition 2.13 of \cite{bohmanngerhardtshipley} for the coassociativity and counitality diagrams as well as those specifying the interactions between the algebra and coalgebra structures.

\begin{definition}[{\cite[Def 2.11]{bohmanngerhardtshipley}}]
Let $C$ be a cocommutative coalgebra over a field. A \textbf{$\square_C$-algebra} $D$ is a $C$-comodule along with a multiplication map $\mu: D \square_C D \to D$ and a unit map $\eta: C \to D$ that are associative and unital maps of $C$-comodules.
\end{definition}

\begin{definition}[{\cite[Def 2.10]{bohmanngerhardtshipley}}]
Let $C$ be a cocommutative coalgebra over a field. A \textbf{$\square_C$-coalgebra} $D$ is a $C$-comodule along with a comultiplication map $\Delta: D \to D \square_C D$ and a counit map $\epsilon: D \to C$ that are coassociative and counital maps of $C$-comodules.
\end{definition}

\begin{definition}[{\cite[Def 2.12, 2.13]{bohmanngerhardtshipley}}]
Let $C$ be a cocommutative coalgebra over a field. A \textbf{$\square_C$-bialgebra} $D$ is a $\square_C$-coalgebra that is also equipped with a multiplication map $\mu: D \square_C D \to D$ and a unit map $\eta: C \to D$ that satisfy associativity and unitality. The multiplication must also be compatible with the $\square_C$-coalgebra structure. A \textbf{$\square_C$-Hopf algebra} $D$ is a $\square_C$-bialgebra along with an antipode $\chi: D \to D$ that is a $C$-comodule map satisfying the corresponding hexagonal antipode diagram.
\end{definition}

\noindent In \cite{bohmanngerhardtshipley}, they further extend these definitions to that of a \textbf{differential bigraded $\square_C$-Hopf algebra} (Definition 6.8) and a \textbf{spectral sequence of $\square_C$-Hopf algebras} (Definition 6.9). 

We also recall the definitions of coflat and connected coalgebras. 

\begin{definition}
    For a coalgebra $C$ over a field $k$, a right comodule $M$ over $C$ is called \textbf{coflat} if $M \square_C -$ is exact as a functor from left $C$-comodules to $k$-modules.
\end{definition}

\begin{definition}
    A graded $k$-coalgebra $D_*$ is connected if $D_*=0$ when $* <0$, and the counit map $\epsilon: D_* \to k$ is an isomorphism in degree zero.
\end{definition}

\noindent We can now state Bohmann-Gerhardt-Shipley's analog of \cite[Theorem 4.5]{angeltveit2005hopf} for $\coTHH$. 

\begin{theorem}[{\cite[Thm 6.14]{bohmanngerhardtshipley}}]
For $C$ a connected cocommutative coalgebra spectrum and $k$ a field,
if for $r \ge 2$ each $E_r^{*,*}(C)$ is coflat over $H_*(C;k)$,
then the coB\"okstedt spectral sequence is a spectral sequence of $\square_{H_*(C;k)}$-Hopf algebras.
\end{theorem}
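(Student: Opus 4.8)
The plan is to dualize the Angeltveit--Rognes proof of their Theorem 4.5, systematically replacing algebra structures by coalgebra structures and tensor products over $H_*(C;k)$ by cotensor products $\square_{H_*(C;k)}$. The geometric input is that $\coTHH(C)$ is assembled from the cosimplicial spectrum $\coTHH(C)^\bullet$, which models the cotensor of $C$ with the simplicial circle, and that all of the Hopf-algebra structure ultimately comes from self-maps of $S^1$ together with the cocommutative comultiplication of $C$. First I would exhibit three structure maps on $\coTHH(C)$: a comultiplication $\coTHH(C) \to \coTHH(C)\,\square_C\,\coTHH(C)$ induced levelwise by the comultiplication $\Delta$ of $C$, where cocommutativity of $C$ is exactly what makes these levelwise comultiplications assemble into a map of cosimplicial spectra (this is dual to the commutative multiplication on classical $\THH$ coming from commutativity of $A$); a multiplication $\coTHH(C)\,\square_C\,\coTHH(C) \to \coTHH(C)$ obtained contravariantly from the pinch map $S^1 \to S^1 \vee S^1$ (dual to the pinch-induced coproduct on $\THH$); and an antipode $\chi\colon \coTHH(C) \to \coTHH(C)$ induced by the orientation-reversing self-map of $S^1$. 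Verifying that these satisfy the $\square_C$-Hopf algebra axioms amounts to transporting the standard relations among the pinch, fold, and flip maps on the circle through the cotensor construction.

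Next I would pass these structures to the coB\"okstedt spectral sequence. That spectral sequence is the Bousfield--Kan spectral sequence of the $\Tot$-tower of $\coTHH(C)^\bullet$, and each of the three structure maps above is induced by a map of cosimplicial spectra; hence each is compatible with the $\Tot$-filtration and therefore induces maps on the associated graded and on every page $E_r^{*,*}(C)$. The coflatness hypothesis enters precisely here. Since for $r \ge 2$ each $E_r^{*,*}(C)$ is coflat over $H_*(C;k)$, the natural comparison map identifies $E_r^{*,*}$ of a cotensor of comodule spectra with the cotensor $E_r^{*,*}(C)\,\square_{H_*(C;k)}\,E_r^{*,*}(C)$ of the separate $E_r$-terms. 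Thus the comultiplication and multiplication genuinely take values in the cotensor over $H_*(C;k)$ page by page, and connectedness of $H_*(C;k)$ provides the counit and unit concentrated in degree zero. Together these show that each $E_r^{*,*}(C)$ is a differential bigraded $\square_{H_*(C;k)}$-Hopf algebra in the sense of Definition 6.8 of \cite{bohmanngerhardtshipley}.

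To upgrade this to a spectral sequence of $\square_{H_*(C;k)}$-Hopf algebras in the sense of Definition 6.9 of \cite{bohmanngerhardtshipley}, it remains to check that each differential $d_r$ is compatible with the comultiplication, multiplication, and antipode, and that the induced structure on the homology of $(E_r^{*,*}(C), d_r)$ agrees with that on $E_{r+1}^{*,*}(C)$. This is formal once the structure maps are known to preserve the $\Tot$-filtration: the differential $d_r$ is the connecting homomorphism of the tower, which is natural with respect to maps induced by cosimplicial maps, so it automatically commutes with the structure maps up to the appropriate Koszul signs, making $d_r$ simultaneously a derivation and a coderivation.

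I expect the main obstacle to be the bialgebra compatibility---the interaction of the pinch-induced multiplication with the levelwise comultiplication---verified page by page rather than merely on the abutment $H_*(\coTHH(C);k)$. Because $\square_{H_*(C;k)}$ is defined as an equalizer, cotensor products are exact and compute correctly only when the relevant comodules are coflat; without the hypothesis one cannot identify $E_r^{*,*}$ of a cotensor with the cotensor of the $E_r$-terms, and the structure maps simply fail to be defined at each page. Carefully pushing the Hopf relations on $S^1$ through the cotensor formalism while tracking the coflatness needed at every stage is where the real work lies, whereas the remaining verifications follow by dualizing the corresponding steps of \cite{angeltveit2005hopf}.
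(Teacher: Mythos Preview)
The paper does not actually prove this statement; it is quoted verbatim as Theorem 6.14 of \cite{bohmanngerhardtshipley}, with no argument supplied here. The only remark the paper makes is, in connection with the relative version stated immediately afterward, that ``the proof of the above result follows as in Theorem 6.14 of \cite{bohmanngerhardtshipley} for the setting of $Hk$-modules, with $\pi_*(C)$ in place of their $H_*(C;k)$.'' So there is nothing in the present paper to compare your proposal against beyond the bare citation.

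That said, your outline is consistent with what the paper signals about the cited proof: it is obtained by dualizing the Angeltveit--Rognes argument, trading algebra structure for coalgebra structure and $\otimes_{H_*(A)}$ for $\square_{H_*(C;k)}$, with the circle-action maps (pinch, fold, flip) supplying the multiplication, comultiplication, and antipode and the coflatness hypothesis used to identify the $E_r$-page of a cotensor with the cotensor of $E_r$-pages. Your identification of where coflatness enters and why connectedness is needed matches the role these hypotheses play in \cite{bohmanngerhardtshipley}. If you want to claim this as a proof rather than a sketch, the point that would require the most care is making precise the cosimplicial model for $\coTHH(C)\,\square_C\,\coTHH(C)$ and the map of Bousfield--Kan spectral sequences it induces; the rest is, as you say, formal once that is in place.
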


It follows from their work that the relative coB\"okstedt spectral sequence computing $\pi_*(\coTHH(C))$ for $C$ an $Hk$-coalgebra also has this type of $\square$-Hopf algebra structure. For the remainder of the paper we will focus on this case, in order to use this $\square$-coalgebra structure.

\begin{theorem}
For $C$ a cocommutative $Hk$-coalgebra spectrum,
if for $r \ge 2$ each $E_r^{*,*}(C)$ is coflat over $\pi_*(C)$, then the relative coB\"okstedt spectral sequence computing $\pi_*(\coTHH^{Hk}(C))$ is a spectral sequence of $\square_{\pi_*(C)}$-Hopf algebras.
\end{theorem}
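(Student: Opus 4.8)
The plan is to recognize that, when $R = Hk$, the relative coB\"okstedt spectral sequence is as an algebraic object formally identical to the (absolute) coB\"okstedt spectral sequence treated in \cite[Thm 6.14]{bohmanngerhardtshipley}, with the graded $k$-coalgebra $\pi_*(C)$ playing the role that $H_*(C;k)$ plays there, so that their argument carries over essentially verbatim. The crucial point is that $\pi_*(Hk) = k$ is a field, so by Corollary~\ref{Cor-pi} the $E_2$-page of the relative spectral sequence is $\coHH^k_{s,t}(\pi_*(C))$, the classical coHochschild homology of a graded $k$-coalgebra. This is exactly the input to which their $\square$-Hopf algebra structure theorem applies, and $\coHH^k(\pi_*(C))$ is already known to be a $\square_{\pi_*(C)}$-Hopf algebra.

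First I would produce the structure maps at the cosimplicial level. Since $C$ is a cocommutative $Hk$-coalgebra, its comultiplication $\Delta\colon C \to C \wedge_{Hk} C$, counit $\epsilon\colon C \to Hk$, and the cocommutativity symmetry induce, by functoriality of $\coTHH^{Hk}(-)$ together with the diagonal and collapse constructions of \cite{bohmanngerhardtshipley}, a comultiplication, counit, multiplication, unit, and antipode on the cosimplicial $Hk$-module $\coTHH^{Hk}(C)^\bullet$. These maps are assembled entirely from the (co)algebra structure of $C$ and from the cotensor product, now formed in the category of $Hk$-modules rather than in spectra; because the coassociativity, counitality, and bialgebra-compatibility diagrams defining a $\square_C$-Hopf algebra are formal consequences of the corresponding properties of $C$, they commute in the relative setting for the same reasons as in the absolute one.

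Next I would transfer this structure to the spectral sequence. The Bousfield-Kan spectral sequence is lax monoidal for the relevant pairings of cosimplicial $Hk$-modules, so the cosimplicial structure maps above descend to maps of spectral sequences; by naturality these recover on $E_2$ the $\square_{\pi_*(C)}$-Hopf algebra structure on $\coHH^k(\pi_*(C))$ and are compatible with the abutment $\pi_*(\coTHH^{Hk}(C))$. The main obstacle, and the only place the hypothesis enters, is verifying that the differentials $d_r$ respect this structure, i.e.\ that each $E_r^{*,*}(C)$ is a differential bigraded $\square_{\pi_*(C)}$-Hopf algebra in the sense of \cite{bohmanngerhardtshipley}. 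This requires that forming $E_{r+1}^{*,*}(C)$ as the homology of $(E_r^{*,*}(C), d_r)$ commute with the cotensor product $-\,\square_{\pi_*(C)}\,-$, so that the coproduct on $E_r$ induces the coproduct on $E_{r+1}$ and the Leibniz-type compatibilities persist. Exactness of the cotensor product is precisely coflatness, so the assumption that each $E_r^{*,*}(C)$ is coflat over $\pi_*(C)$ supplies exactly what is needed. Since $\pi_*(C)$ is a coalgebra over the field $k = \pi_*(Hk)$, the cotensor and coflatness formalism of \cite{bohmanngerhardtshipley} applies without change, and the proof of their Theorem 6.14 yields the relative statement.
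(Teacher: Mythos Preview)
Your proposal is correct and matches the paper's approach exactly: the paper does not give an independent proof but simply observes that the argument of \cite[Thm~6.14]{bohmanngerhardtshipley} goes through in the category of $Hk$-modules with $\pi_*(C)$ substituted for $H_*(C;k)$, which is precisely your strategy. If anything, you have sketched more of the underlying argument (the cosimplicial structure maps, the role of coflatness in propagating the coproduct from $E_r$ to $E_{r+1}$) than the paper itself does.
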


Note that in this case the relative coB\"okstedt spectral sequence is the Bousfield-Kan spectral sequence for the cosimplicial object $\coTHH^{Hk}(C)^\bullet$, and the proof of the above result follows as in Theorem 6.14 of \cite{bohmanngerhardtshipley} for the setting of $Hk$-modules, with $\pi_*(C)$ in place of their $H_*(C;k)$ in general.

As we saw in the B\"okstedt spectral sequence, this additional algebraic structure is computationally useful. In order to understand these ideas in the dual setting, we require the following definitions and results regarding indecomposable and primitive elements.

\begin{definition}
    A unital $\square_C$-algebra $A$ with multiplication $\mu: A \square_C A \to A$ and unit $\eta: C \to A$ is \textbf{augmented} if there exists an augmentation map $\epsilon: A \to C$ such that $\epsilon \mu = \epsilon \square \epsilon$ and $\epsilon \eta = \text{Id}$.
\end{definition}

\begin{definition}
    A counital $\square_C$-coalgebra $D$ with comultiplication $\Delta: D \to D \square_C D$ and counit $\epsilon: D \to C$ is \textbf{coaugmented} if there exists a coaugmentation map $\eta: C \to D$ such that $\Delta \eta = \eta \square \eta$ and $\epsilon \eta = \text{Id}$.
\end{definition}

The definitions of primitive and indecomposable elements are then analogous to those given earlier.

\begin{definition}[{\cite[Def 2.16]{bohmanngerhardtshipley}}]
    Given a coaugmented $\square_C$-coalgebra $D$, let $PD$ be defined by the short exact sequence
    \[
    0 \xlongrightarrow{} PD \xlongrightarrow{} JD \xlongrightarrow{\Delta} JD \square_C JD,
    \]
    where $JD = \text{coker}(\eta)$.  An element in $\text{ker}(\epsilon)$ is \textbf{primitive} if its image in $JD$ is in $PD$.
\end{definition}

\begin{definition}[{\cite[Def 2.15]{bohmanngerhardtshipley}}]
    For an augmented $\square_C$-algebra $A$, the \textbf{indecomposables} of $A$, denoted by $QA$, are defined by the short exact sequence
    \[
    IA \square_C IA \xlongrightarrow{\mu} IA \xlongrightarrow{} QA \xlongrightarrow{} 0,
    \]
    where $IA = \text{ker}(\epsilon)$.
\end{definition}

With these definitions, we can now state the following result that will be particularly useful for computations.

\hypertarget{shortest}{}
\begin{theorem}\label{shortest}
For a field $k$, let $C$ be a cocommutative $Hk$-coalgebra spectrum such that $\coHH_*(\pi_*(C))$ is coflat over $\pi_*(C)$
and the graded coalgebra $\pi_*(C)$ is connected.  Then the $E_2$-term of the relative coB\"okstedt spectral sequence calculating $\pi_*(\coTHH^{Hk}(C))$,
\[
E_2^{*,*}(C) = \coHH^k_*(\pi_*(C)),
\]
is a $\square_{\pi_*(C)}$-bialgebra, and the shortest non-zero differential $d^{s,t}_r$ in lowest total degree $s+t$ maps from a $\square_{\pi_*(C)}$-algebra indecomposable to a $\square_{\pi_*(C)}$-coalgebra primitive.
\end{theorem}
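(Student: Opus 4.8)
The plan is to dualize the Angeltveit--Rognes result [Prop 4.8, \cite{angeltveit2005hopf}] recalled above, replacing the Hopf-algebra argument over the tensor product with the corresponding argument for a $\square_{\pi_*(C)}$-bialgebra over the \emph{cotensor} product, following the classical Milnor--Moore principle that the shortest differential in a spectral sequence of (co)Hopf algebras is detected on indecomposables and primitives. First I would record the structural input: by the relative analogue of [Thm 6.14, \cite{bohmanngerhardtshipley}] stated above, coflatness of $\coHH_*(\pi_*(C)) = E_2^{*,*}(C)$ over $\pi_*(C)$ together with connectivity of $\pi_*(C)$ makes $E_2$ a $\square_{\pi_*(C)}$-bialgebra, so that the product $\mu\colon E_2 \square_{\pi_*(C)} E_2 \to E_2$ and coproduct $\Delta\colon E_2 \to E_2 \square_{\pi_*(C)} E_2$ are defined and the short exact sequences defining $Q E_2$ and $P E_2$ make sense.

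Next I would reduce the differential statement to a single page. Let $d_r^{s,t}$ be a shortest nonzero differential, i.e.\ one of minimal length $r$; then every $d_{r'}$ with $r' < r$ vanishes identically, so $E_2 = E_3 = \cdots = E_r$ as bigraded $\square_{\pi_*(C)}$-bialgebras and I may work on $E_r$ with the $E_2$-structure. Because the sequence is one of $\square_{\pi_*(C)}$-Hopf algebras, $d_r$ is a $\pi_*(C)$-comodule map that is simultaneously a derivation for $\mu$ and a coderivation for $\Delta$ over the cotensor, i.e.\ $d_r \mu = \mu(d_r \square \mathrm{Id} \pm \mathrm{Id} \square d_r)$ and $\Delta d_r = (d_r \square \mathrm{Id} \pm \mathrm{Id} \square d_r)\Delta$; coflatness of $E_r = E_2$ is exactly what makes $d_r \square \mathrm{Id}$ and $\mathrm{Id} \square d_r$ well defined on the cotensor.

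Among the length-$r$ differentials I then choose the source $x \in E_r^{s,t}$ of lowest total degree $s+t$ with $d_r(x) \neq 0$; by minimality of $r$ and of $s+t$, $d_r$ vanishes on every class of strictly smaller total degree. For indecomposability, suppose $x$ were decomposable, so that modulo the coaugmentation $x$ lies in the image of $\mu$ on $I E_r \square_{\pi_*(C)} I E_r$. Since $\pi_*(C)$ is connected, each factor appearing in such a product lies in positive total degree, hence in total degree strictly less than $s+t$; the derivation identity together with $d_r = 0$ in lower total degrees forces $d_r(x) = 0$, a contradiction, so $x$ is a $\square_{\pi_*(C)}$-algebra indecomposable. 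For primitivity of $y = d_r(x)$, I write $\Delta(x) = 1 \otimes x + x \otimes 1 + \bar\Delta(x)$ with $\bar\Delta(x) \in J E_r \square_{\pi_*(C)} J E_r$; connectivity again forces each tensor factor of $\bar\Delta(x)$ into strictly lower total degree, so $d_r$ annihilates them, and the coderivation identity applied to $x$ (using $d_r(1)=0$) yields $\Delta(y) = 1 \otimes y + y \otimes 1$, i.e.\ $y$ is a $\square_{\pi_*(C)}$-coalgebra primitive.

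The main obstacle throughout is the bookkeeping for the cotensor product: unlike the tensor product, $E_r \square_{\pi_*(C)} E_r$ is only a subobject of $E_r \otimes E_r$ cut out by an equalizer, so I must verify that the derivation and coderivation formulas descend to it (this is where coflatness is essential) and that the grading argument --- that decomposables and reduced coproducts are assembled from classes of strictly lower total degree --- survives passage to the cotensor. Connectivity of $\pi_*(C)$ is precisely the hypothesis that guarantees the augmentation ideal and the coaugmentation coideal sit in positive total degree, which is what drives both halves of the minimality argument; note that only the $\square_{\pi_*(C)}$-bialgebra structure, not the full antipode, is needed.
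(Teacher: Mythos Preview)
Your proposal is correct and is precisely the argument the paper is pointing to: the paper gives no proof beyond ``follows as in the non-relative version in \cite{bohmanngerhardtshipley}'', and what you have written is exactly that argument, namely the Milnor--Moore style dualization of Angeltveit--Rognes Proposition~4.8 to the $\square_{\pi_*(C)}$-setting. Your identification of where coflatness and connectivity are used (well-definedness of $d_r \square \mathrm{Id}$ on the cotensor, and positivity of the cosimplicial degree on $IE_r$ and $JE_r$ so that the minimality induction goes through) is accurate, and your observation that only the bialgebra structure is needed matches the statement.
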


The proof follows as in the non-relative version in \cite{bohmanngerhardtshipley}.

\section{Explicit calculations}

A natural question that arises when studying $\coTHH$ is to ask what kinds of coalgebra spectra exist, and for those that exist, is the $E_2$-page of the relative coB\"okstedt spectral sequence computable?  Although the coB\"okstedt spectral sequence can study examples of the form $\Sigma^\infty_{+}X$ for simply connected $X$ as in \cite{bohmann2018computational, bohmanngerhardtshipley}, we can now study general $R$-coalgebra spectra via the relative coB\"okstedt spectral sequence. Here we examine $H\mathbb{F}_p \wedge_{H\mathbb{Z}} H\mathbb{F}_p$, which is an $H\mathbb{F}_p$-coalgebra spectrum by work of Bayindir-P\'eroux \cite{bayindir2020spanierwhitehead}.
We begin by computing the $E_2$-term of the relative coB\"okstedt spectral sequence calculating $\pi_*(\coTHH^{H\mathbb{F}_p}(H\mathbb{F}_p \wedge_{H\mathbb{Z}} H\mathbb{F}_p))$.

\hypertarget{HZEX}{}
\begin{proposition}\label{HZEX}
For the $H\mathbb{F}_p$-coalgebra $H\mathbb{F}_p \wedge_{H\mathbb{Z}} H\mathbb{F}_p$, the $E_2$-page of the spectral sequence calculating $\pi_{t-s}(\coTHH^{H\mathbb{F}_p}(H\mathbb{F}_p \wedge_{H\mathbb{Z}} H\mathbb{F}_p))$ is
\[
E_2^{s,t} = \coHH^{\mathbb{F}_p}_{s,t}(\pi_*(H\mathbb{F}_p \wedge_{H\mathbb{Z}} H\mathbb{F}_p)) \cong \Lambda_{\mathbb{F}_p} (\tau) \otimes_{\mathbb{F}_p} \mathbb{F}_p[\omega]
\]
for $||\tau||=(0,1), ||\omega||=(1,1)$.
\end{proposition}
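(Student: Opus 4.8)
The plan is to reduce the statement to a direct computation of classical coHochschild homology, and then show that the relevant cochain complex has trivial differential. First I would identify the input coalgebra. Since $H\mathbb{Z}$ is an Eilenberg--MacLane ring spectrum, the K\"unneth (Tor) spectral sequence for $H\mathbb{F}_p \wedge_{H\mathbb{Z}} H\mathbb{F}_p$ collapses and yields $\pi_*(H\mathbb{F}_p \wedge_{H\mathbb{Z}} H\mathbb{F}_p) \cong \Tor^{\mathbb{Z}}_*(\mathbb{F}_p, \mathbb{F}_p)$, which is $\mathbb{F}_p$ concentrated in degrees $0$ and $1$. As a coalgebra the degree-one class $\tau$ is forced to be primitive, since $\bar{C}\otimes\bar{C}$ is concentrated in internal degrees $\ge 2$ and so there is no room for a reduced coproduct on $\tau$; hence $\pi_*(H\mathbb{F}_p \wedge_{H\mathbb{Z}} H\mathbb{F}_p) \cong \Lambda_{\mathbb{F}_p}(\tau)$ with $|\tau|=1$. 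Because $\pi_*(H\mathbb{F}_p) = \mathbb{F}_p$ is a field, $\pi_*(C)$ is automatically flat over it, so Corollary~\ref{Cor-pi} applies and the problem becomes the computation of $\coHH^{\mathbb{F}_p}_{*,*}(\Lambda_{\mathbb{F}_p}(\tau))$.

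Next I would set up the normalized coHochschild cochain complex computing this $E_2$-page. In cosimplicial degree $s$ the normalized complex is $N^s = C \otimes \bar{C}^{\otimes s}$, where $\bar{C} = \ker(\epsilon) = \mathbb{F}_p\{\tau\}$ is one-dimensional in internal degree $1$. Thus each $N^s$ is two-dimensional, with basis $1 \otimes \tau^{\otimes s}$ in bidegree $(s,s)$ and $\tau \otimes \tau^{\otimes s}$ in bidegree $(s, s+1)$. At the level of bigraded $\mathbb{F}_p$-modules this is already exactly $\Lambda_{\mathbb{F}_p}(\tau) \otimes_{\mathbb{F}_p} \mathbb{F}_p[\omega]$ with $\|\tau\|=(0,1)$ and $\|\omega\|=(1,1)$: the classes $1 \otimes \tau^{\otimes s}$ account for $\omega^s$ and the classes $\tau \otimes \tau^{\otimes s}$ for $\tau\omega^s$. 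The entire content of the proposition is therefore that the coHochschild differential $d = \sum_i (-1)^i \delta^i \colon N^s \to N^{s+1}$ vanishes.

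To see this, I would evaluate $d$ on each basis element, using that here $N=C$ so both coactions are the comultiplication. Since $\tau$ is primitive, every interior coface $\delta_i$ with $1 \le i \le s$ applies $\Delta$ to a $\tau$ and produces terms containing a group-like $1$ in a coalgebra slot; such terms lie outside the normalized subcomplex and do not survive. The only potentially surviving contributions come from the two coaction cofaces $\delta_0 = \gamma \otimes \mathrm{Id}^{\otimes s}$ and $\delta_{s+1} = \tilde{t} \circ (\psi \otimes \mathrm{Id}^{\otimes s})$. On $1 \otimes \tau^{\otimes s}$ both of these also produce only non-normalized terms, so $d(1\otimes\tau^{\otimes s}) = 0$. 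On $\tau \otimes \tau^{\otimes s}$ each of $\delta_0$ and $\delta_{s+1}$ contributes a single normalized term equal to $1 \otimes \tau^{\otimes(s+1)}$; the cosimplicial sign $(-1)^{s+1}$ on $\delta_{s+1}$ together with the Koszul sign $(-1)^s$ incurred when the twist $\tilde{t}$ carries the odd-degree class $\tau$ past the remaining $s$ factors of $\tau$ makes this contribution cancel the $\delta_0$ term. Hence $d(\tau \otimes \tau^{\otimes s}) = 0$ as well, the differential is identically zero, and $E_2 = \coHH^{\mathbb{F}_p}_{*,*}(\Lambda_{\mathbb{F}_p}(\tau)) \cong \Lambda_{\mathbb{F}_p}(\tau) \otimes_{\mathbb{F}_p} \mathbb{F}_p[\omega]$.

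The main obstacle is the sign bookkeeping in the last step: one must verify that the cosimplicial signs and the Koszul sign from the cyclic twist $\tilde{t}$ combine to give exact cancellation in every characteristic, including $p=2$, where the two surviving terms are instead equal and cancel because $1+1=0$. A secondary point worth recording carefully is the standard fact that $d$ restricts to the normalized subcomplex; this is what justifies discarding all terms containing a group-like $1$ in a coalgebra factor, since it guarantees that the non-normalized terms produced above must cancel among themselves and can be ignored.
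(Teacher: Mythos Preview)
Your argument is correct and follows the same overall strategy as the paper: identify $\pi_*(H\mathbb{F}_p\wedge_{H\mathbb{Z}}H\mathbb{F}_p)$ via the K\"unneth/Tor spectral sequence as $\Lambda_{\mathbb{F}_p}(\tau)$, note flatness over $\mathbb{F}_p$ is automatic so Corollary~\ref{Cor-pi} applies, and then compute $\coHH^{\mathbb{F}_p}(\Lambda_{\mathbb{F}_p}(\tau))$. The one genuine difference is in this last step. The paper simply invokes Proposition~5.1 of \cite{bohmann2018computational} for the identification $\coHH^{\mathbb{F}_p}(\Lambda_{\mathbb{F}_p}(\tau))\cong \Lambda_{\mathbb{F}_p}(\tau)\otimes\mathbb{F}_p[\omega]$, whereas you supply a direct argument: you write down the normalized complex $C\otimes\bar C^{\otimes s}$, observe that it already has the correct bigraded module structure, and then check by hand that the differential vanishes by tracking the two surviving coface contributions and their signs. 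Your sign analysis is right (the $(-1)^{s+1}$ from the alternating sum and the Koszul $(-1)^s$ from $\tilde t$ combine to $-1$, cancelling the $\delta_0$ term), and your degree argument for the primitivity of $\tau$ is a detail the paper glosses over. What the paper's route buys is brevity and a pointer to a general formula covering all exterior coalgebras; what your route buys is a self-contained proof that does not rely on an external reference and makes the single-generator case completely explicit.
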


\begin{proof}
From \cite{bayindir2020spanierwhitehead}, $H\mathbb{F}_p \wedge_{H\mathbb{Z}} H\mathbb{F}_p$ is an $H\mathbb{F}_p$-coalgebra.
Note that $\pi_*(H\mathbb{F}_p \wedge_{H\mathbb{Z}} H\mathbb{F}_p)$ is flat over $\pi_*(H\mathbb{F}_p) \cong \mathbb{F}_p$ since modules are flat over fields.  Thus \hyperlink{corollary}{Corollary} \ref{Cor-pi} states that the $E_2$-page of the the relative coB\"okstedt spectral sequence computing $\pi_{t-s}(\coTHH^{H\mathbb{F}_p}(H\mathbb{F}_p \wedge_{H\mathbb{Z}} H\mathbb{F}_p))$ has the form:
\[
E_2^{s,t} = \coHH^{\mathbb{F}_p}_{s,t}(\pi_*(H\mathbb{F}_p \wedge_{H\mathbb{Z}} H\mathbb{F}_p)).
\]
We can use the K\"unneth spectral sequence to calculate $\pi_*(H\mathbb{F}_p \wedge_{H\mathbb{Z}} H\mathbb{F}_p)$ \cite[Thm IV.6.2]{elmendorf1995rings}:
\begin{align*}
    \Tor^{\pi_*(H\mathbb{Z})}_{p,q}(&\pi_*(H\mathbb{F}_p), \pi_*(H\mathbb{F}_p)) \cong \Tor^{\mathbb{Z}}_{p,q}(\mathbb{F}_p, \mathbb{F}_p)  \Rightarrow \pi_{p+q}(H\mathbb{F}_p \wedge_{H\mathbb{Z}} H\mathbb{F}_p).
\end{align*}

\noindent To compute this $E_2$-term, we form a projective resolution of $\mathbb{F}_p$ as a $\mathbb{Z}$-module:
\begin{alignat*}{4}
 &\hspace{.5cm}\mathbb{Z}\hspace{.5cm} &\xlongrightarrow{\times p} &\hspace{.5cm}\mathbb{Z}\hspace{.5cm} &\xlongrightarrow{\text{mod } p} &\hspace{.5cm}\mathbb{F}_p\hspace{.5cm} &\longrightarrow 0.
\end{alignat*}
Then we can truncate and $-\otimes_\mathbb{Z} \mathbb{F}_p$ in order to simplify the resolution to
\begin{alignat*}{3}
 &\hspace{.5cm} \mathbb{F}_p \hspace{.5cm} &\xlongrightarrow[0]{\times p} &\hspace{.5cm} \mathbb{F}_p\hspace{.5cm} &\longrightarrow 0.
\end{alignat*}
Thus we have $\mathbb{F}_p$ in degree 0 and 1, which as a coalgebra is the exterior coalgebra with a single generator in degree 1. 
Therefore the $E_2$-page looks like:
\begin{align*}
E_2^{s,t} &= \coHH^{\mathbb{F}_p}_{s,t}(\pi_*(H\mathbb{F}_p \wedge_{H\mathbb{Z}} H\mathbb{F}_p))\\
&\cong \coHH^{\mathbb{F}_p}_{s,t}(\Lambda_{\mathbb{F}_p} (\tau))\\
&\cong \Lambda_{\mathbb{F}_p} (\tau)  \otimes \mathbb{F}_p[\omega]
\end{align*}
with bidegrees $||\tau|| = (0,1)$ and $||\omega||=(1,1)$ by Proposition 5.1 in \cite{bohmann2018computational} .
\end{proof}

Such results confirm that there are 
examples for which we can compute the $E_2$-page of the relative coB\"okstedt spectral sequence calculating relative topological coHochschild homology. In fact, recent work \cite{bayindir2020spanierwhitehead} completes this computation using properties of the duality between $\coTHH$ and $\THH$ indicating that the spectral sequence should collapse in this case.

In order to apply the $\square$-Hopf algebra structure to find the $E_\infty$-page and complete the computation of the homotopy groups of $\coTHH$ in general, we will consider cocommutative coalgebras with homotopy that is similar to the above $E_2$-page example. We state further computational tools in \hyperlink{primitive}{Proposition} \ref{primitive} and \hyperlink{coalgebrass}{Theorem} \ref{coalgebrass} in order to apply them to the  computations involving the relative coB\"okstedt spectral sequence.

Recall from the previous section that the shortest nonzero differential must go from a $\square$-Hopf algebra indecomposable to a $\square$-coalgebra primitive. In later computations we will consider $\square_C$-coalgebras of the form $C \otimes D$, and Bohmann-Gerhardt-Shipley further identify primitive elements in this case:

\hypertarget{primitive}{}
\begin{proposition}[\cite{bohmanngerhardtshipley}]\label{primitive}
For cocommutative coaugmented $k$-coalgebras $C$ and $D$, $C \otimes D$ is a $\square_C$-coalgebra and an element $c \otimes d \in C \otimes D$
is primitive as an element of the $\square_C$-coalgebra $C \otimes D$ if and only if $d$ is primitive in the $k$-coalgebra $D$.
\end{proposition}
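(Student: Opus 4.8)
The plan is to make the $\square_C$-coalgebra structure on $C \otimes D$ completely explicit and then read off the primitives from the defining short exact sequence. First I would record the structure maps. Using cocommutativity of $C$, the coalgebra $C \otimes D$ becomes a $C$-bicomodule with both coactions induced by $\Delta_C$ on the first tensor factor; its counit is $\epsilon = \mathrm{Id}_C \otimes \epsilon_D \colon C \otimes D \to C$ and its coaugmentation is $\eta = \mathrm{Id}_C \otimes \eta_D \colon C \to C \otimes D$. The $\square_C$-comultiplication is the corestriction of the tensor-coalgebra diagonal $c \otimes d \mapsto \sum (c_{(1)} \otimes d_{(1)}) \otimes (c_{(2)} \otimes d_{(2)})$.

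The technical heart is a cotensor computation. I would establish the natural isomorphism $\Phi \colon C \otimes D \otimes D \xrightarrow{\cong} (C \otimes D)\,\square_C\,(C \otimes D)$ given by $c \otimes d \otimes d' \mapsto \sum (c_{(1)} \otimes d) \otimes (c_{(2)} \otimes d')$, with inverse induced by applying $\epsilon_C$ to the middle $C$-factor. The one substantive check is that $\Phi$ lands in the equalizer defining $\square_C$: comparing $\gamma \otimes \mathrm{Id}$ with $\mathrm{Id} \otimes \psi$ reduces to coassociativity of $\Delta_C$, which is exactly the statement that $C \otimes D$ is cofree as a $C$-comodule. Chasing the comultiplication through $\Phi^{-1}$ (collapsing the middle $C$ by $\epsilon_C$) shows that, under this identification, the $\square_C$-diagonal becomes $\mathrm{Id}_C \otimes \Delta_D$.

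Next I would pass to primitives. Since $D \cong k \oplus JD$ as a coaugmented coalgebra and tensoring over the field $k$ is exact, $J(C\otimes D) = \mathrm{coker}(\eta) \cong C \otimes JD$, and this quotient is again cofree, so the same $\Phi$ yields $J(C\otimes D)\,\square_C\,J(C\otimes D) \cong C \otimes JD \otimes JD$. Tracking the reduced diagonal through these isomorphisms identifies the induced map $\bar\Delta \colon C \otimes JD \to C \otimes JD \otimes JD$ with $\mathrm{Id}_C \otimes \bar\Delta_D$, where $\bar\Delta_D$ is the reduced comultiplication of $D$. Exactness of $C \otimes -$ then gives $P(C \otimes D) = \ker \bar\Delta = C \otimes \ker \bar\Delta_D = C \otimes PD$. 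Finally, for an element $c \otimes d \in \ker \epsilon$ with $c \neq 0$, its image $c \otimes [d]$ in $J(C \otimes D)$ lies in $C \otimes PD$ if and only if $[d] \in PD$, which by definition means $d$ is primitive in $D$; this is the elementary fact that a nonzero pure tensor $c \otimes [d]$ lies in $C \otimes PD$ exactly when $[d] \in PD$.

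I expect the main obstacle to be the bookkeeping in the cotensor isomorphism $\Phi$: verifying that it equalizes the two structure maps and that the induced $\square_C$-comultiplication is precisely $\mathrm{Id}_C \otimes \Delta_D$, together with confirming that the counit and coaugmentation descend to $C \otimes JD$ as claimed. Once the clean identification $P(C \otimes D) \cong C \otimes PD$ is in hand, the stated equivalence is immediate.
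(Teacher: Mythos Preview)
The paper does not actually supply a proof of this proposition; it is stated with a citation to \cite{bohmanngerhardtshipley} and used as a black box in the subsequent computations. So there is no ``paper's proof'' to compare against.

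Your argument is correct and is the natural one. The key identification $(C\otimes D)\,\square_C\,(C\otimes D)\cong C\otimes D\otimes D$ is the standard cofree-comodule computation (indeed $M\,\square_C\,(C\otimes W)\cong M\otimes W$ for any right $C$-comodule $M$, applied with $M=C\otimes D$), and your check that the $\square_C$-diagonal transports to $\mathrm{Id}_C\otimes\Delta_D$ under $\Phi^{-1}$ is exactly the counit identity $\sum c_{(1)}\epsilon_C(c_{(2)})=c$. Passing to cokernels of the coaugmentation and then to kernels of the reduced diagonal uses only exactness of $C\otimes_k -$ over a field, so $P(C\otimes D)\cong C\otimes PD$ follows. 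One small remark on the last line: the claim that a nonzero pure tensor $c\otimes[d]$ lies in $C\otimes PD$ exactly when $[d]\in PD$ implicitly uses that $C\otimes PD\hookrightarrow C\otimes JD$ is injective; over a field this is automatic, but it is worth saying since that is precisely where the hypothesis that $k$ is a field enters.
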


They also prove that if $\coHH(D)$ is coflat over $D$ then $\coHH(D)$ is a $\square_D$-algebra \cite{bohmanngerhardtshipley}. However, in order to similarly identify indecomposable elements, we will restrict to each specific computational setting.

Finally, for $Hk$-coalgebras the relative coB\"okstedt spectral sequence is itself a spectral sequence of $k$-coalgebras, which restricts differentials even further to targets that are $k$-coalgebra primitives.  
Bohmann-Gerhardt-H{\o}genhaven-Shipley-Ziegenhagen
\cite{bohmann2018computational} showed the following result for the ordinary coB\"okstedt spectral sequence, and the relative case follows from their work since for $E=\mathbb{S}$ we are already in the setting of cosimplicial $Hk$-modules.

\hypertarget{coalgebrass}{}
\begin{theorem}\label{coalgebrass}
If $C$ is a connected cocommutative $Hk$-coalgebra that is cofibrant as an $Hk$-module, the relative coB\"okstedt spectral sequence computing $\pi_*(\coTHH^{Hk}(C))$ is a spectral sequence of $k$-coalgebras.  In particular, for every $r>1$ there is a coproduct
\begin{align*}
    \psi: E^{*,*}_r \xlongrightarrow{} E^{*,*}_r \otimes_k E^{*,*}_r,
\end{align*}
and the differentials $d_r$ respect the coproduct.
\end{theorem}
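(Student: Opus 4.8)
The plan is to deduce this from the sphere-level result of \cite{bohmann2018computational} by the reduction already noted: when $E=\mathbb{S}$, the relative coB\"okstedt spectral sequence is nothing but the Bousfield--Kan spectral sequence of the cosimplicial $Hk$-module $\coTHH^{Hk}(C)^\bullet$, and a cosimplicial $Hk$-module is in particular a cosimplicial spectrum. Consequently the entire comultiplicative apparatus that \cite{bohmann2018computational} builds for cosimplicial spectra applies directly, with $\pi_*$ replacing $H_*(-;k)$ and with $k=\pi_*(Hk)$ in the role of the ground field.

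First I would produce a comultiplication at the cosimplicial level. Since $C$ is a cocommutative $Hk$-coalgebra, its comultiplication $\Delta\colon C\to C\wedge_{Hk}C$ makes each cosimplicial degree $\coTHH^{Hk}(C)^r=C^{\wedge_{Hk}(r+1)}$ into an $Hk$-coalgebra, via the tensor-coalgebra comultiplication that applies $\Delta$ in each of the $r+1$ factors and then regroups by the symmetry into two copies of $C^{\wedge_{Hk}(r+1)}$. These degreewise comultiplications assemble into a map of cosimplicial $Hk$-modules
\[
\Psi\colon \coTHH^{Hk}(C)^\bullet \longrightarrow \coTHH^{Hk}(C)^\bullet \wedge_{Hk} \coTHH^{Hk}(C)^\bullet,
\]
with the level-wise smash on the right. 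The real content of this step is checking that $\Psi$ commutes with the coface maps $\delta^i$ and codegeneracies $\sigma_i$ of $\coTHH^{Hk}(C)^\bullet$; this is a diagram chase in which coassociativity handles the interior cofaces $\mathrm{Id}^{\otimes i}\otimes\Delta\otimes\mathrm{Id}^{\otimes(r-i)}$, while cocommutativity of $\Delta$ is exactly what is needed to commute the comultiplication past the regrouping symmetry and the twist map $\tilde t$ appearing in the outer cofaces.

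Next I would pass to the spectral sequence. Applying $\pi_*$ degreewise and using that $k$ is a field, the K\"unneth map $\pi_*(M\wedge_{Hk}N)\xrightarrow{\ \cong\ }\pi_*(M)\otimes_k\pi_*(N)$ is an isomorphism for $Hk$-modules $M,N$, so $\Psi$ makes $\pi_*(\coTHH^{Hk}(C)^\bullet)$ a cosimplicial graded $k$-coalgebra. On the $E_2$-page this produces exactly the classical coHochschild coproduct on $E_2^{*,*}=\coHH^k_*(\pi_*(C))$. To propagate the coproduct to every later page I would invoke the pairing on Bousfield--Kan spectral sequences of \cite{bohmann2018computational}, which, fed the cosimplicial map $\Psi$, yields coproducts $\psi\colon E^{*,*}_r\to E^{*,*}_r\otimes_k E^{*,*}_r$ for all $r>1$. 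Because $\Psi$ is a map of cosimplicial $Hk$-modules and each $d_r$ is natural in such maps, $\psi$ commutes with $d_r$, which is precisely the assertion that we have a spectral sequence of $k$-coalgebras. Connectedness of $\pi_*(C)$ (together with the degreewise finiteness it provides in the cases of interest) and cofibrancy of $C$ enter only to keep the construction homotopically meaningful --- Reedy fibrant replacement, convergence, and the pagewise validity of the K\"unneth comparison.

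I expect the main obstacle to be the third step: constructing the comultiplicative pairing on the Bousfield--Kan spectral sequences themselves and showing that $\Psi$ descends to a coproduct on every page, not merely on $E_2$ and the abutment. This is the dual of the familiar multiplicative Tot-pairing for the homotopy spectral sequence of a cosimplicial object, and the fact that $\Tot$ does not commute with the smash product makes the comultiplicative direction the delicate one; it is precisely here that working over the field $k$, so that the relevant K\"unneth comparisons are pagewise isomorphisms, is essential. I would handle it by citing the corresponding construction in \cite{bohmann2018computational} verbatim, observing that their diagonal/Tot arguments are insensitive to whether one works in all spectra or in the full subcategory of $Hk$-modules.
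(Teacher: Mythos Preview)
Your proposal is correct and takes essentially the same approach as the paper: both observe that for $E=\mathbb{S}$ the relative coB\"okstedt spectral sequence is already the Bousfield--Kan spectral sequence of the cosimplicial $Hk$-module $\coTHH^{Hk}(C)^\bullet$, so the $k$-coalgebra structure constructed in \cite{bohmann2018computational} applies verbatim with $\pi_*$ in place of $H_*(-;k)$. The paper simply asserts this reduction in a single sentence, while you have spelled out the ingredients (the cosimplicial comultiplication $\Psi$, the K\"unneth isomorphism over $k$, and the pagewise pairing) that make the citation go through.
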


Using these algebraic structures, we show the following result:

\begin{theorem}
For a field $k$, let $C$ be a cocommutative $Hk$-coalgebra spectrum that is cofibrant as an $Hk$-module with $\pi_*(C) \cong \Lambda_k(y)$ for $|y|$ odd and greater than 1. Then the relative coB\"okstedt spectral sequence collapses and 
\[
\pi_*(\coTHH^{Hk}(C)) \cong \Lambda_k(y) \otimes k[w]
\]
as graded $k$-modules for $|w|=|y|-1$.
\end{theorem}
\begin{proof}
The $E_2$-page of the relative coB\"okstedt spectral sequence is
\[
E_2^{s,t} = \coHH_{s,t}^k(\Lambda_k(y)) \cong \Lambda_k(y) \otimes k[w]
\]
by Proposition 5.1 in \cite{bohmann2018computational}.

By \hyperlink{shortest}{Proposition} \ref{shortest} we know that the shortest nontrivial differential in lowest total degree must map from a $\square_{\Lambda_k(y)}$-algebra indecomposable to a $\square_{\Lambda_k(y)}$-coalgebra primitive. Since the $E_2$-page is given by $\Lambda_k(y) \otimes k[w]$, \hyperlink{primitive}{Proposition} \ref{primitive} implies that elements in this $\square_{\Lambda_k(y)}$-coalgebra will be primitive if and only if the component from $k[w]$ is primitive in the $k$-coalgebra $k[w]$. Recall that primitives in the $k$-coalgebra $k[w_1, w_2, \ldots]$ more generally are of the form $w_i^{p^m}$ for $p = char(k)$, so here
\begin{align*}
    \Lambda_k(y) \otimes \big(\text{primitives in } k[w]\big) &\cong \Lambda_k(y) \otimes w^{p^m}.
\end{align*}
Therefore the only terms in the spectral sequence that are possible targets of differentials are $w^{p^m}$ and $y w^{p^m}$ for $m \ge 0$ and prime $p$. 

Bohmann-Gerhardt-Shipley also identify the indecomposable elements for the $\square_{\Lambda_k(y)}$-algebra $\Lambda_k(y) \otimes k[w]$ as those of the form $\Lambda_k(y) \otimes w$ 
\cite{bohmanngerhardtshipley}. Thus the only terms in the spectral sequence that are possible sources of differentials are $1 \otimes w = w$ and $y \otimes w = yw$. 
However, differentials from $w$ must be trivial for degree reasons. Further, because the possible targets are of the form $w^{p^m}$ and $y w^{p^m}$, and the $y w^{p^m}$ appear in the same diagonal as $yw$, those elements cannot be hit by any $(r,r-1)$-bidegree differential. Thus we need only justify why differentials from $yw$ cannot hit terms of the form $w^{p^m}$.

Note that since $||y||=(0,2n+1)$ 
and $||w||=(1,2n+1)$, the elements we are considering live in the following bidegrees ($||-||$)
for $m,n \ge 1$:
\begin{align*}
    ||w^{p^m}|| &= (p^m, p^m(2n+1)) = (p^m, 2np^m+p^m)\\
    ||y w|| &= (1, 4n+2)\\
    ||d_r(y w)|| &= (1+r, 4n+2+r-1) = (1+r, 4n+r+1).
\end{align*}

First, we will justify that $||d_r(yw)|| \ne ||w^{p^m}||$ for any $m \ge 1$.  Suppose by contradiction that these terms were in the same bidegrees. Then the first coordinate tells us that $r+1 = p^m$, so we have from the second coordinate:
\begin{align*}
    4n+p^m &= 2np^m+p^m\\
    4n &= 2np^m\\
    2 &= p^m,
\end{align*}
which is true only when $m=1$ and $p=2$.  However, if $m=1$ then $r+1=p^m=2^1$ implies that $r=1$ and we are already considering the $E_2$-page, so no such differential exists.

Note that 
the convergence conditions of \hyperlink{converge}{Corollary} \ref{converge} hold,
and the relative coB\"okstedt spectral sequence converges completely to $\pi_*(\coTHH^{Hk}(C))$.  Therefore
\[
\pi_*(\coTHH^{Hk}(C)) \cong \Lambda_k(y) \otimes k[w]
\]
as graded $k$-modules.
\end{proof}

We now generalize this result to more cogenerators. In particular, we aim to compute $\pi_*(\coTHH^{Hk}(C))$ for $C$ an $Hk$-coalgebra such that $\pi_*(C) \cong \Lambda_k(y_1, y_2)$.

\hypertarget{Exton2}{}
\begin{theorem}\label{Exton2}
Let $k$ be a field and let $p=char(k)$, including 0. For $C$ a cocommutative $Hk$-coalgebra spectrum that is cofibrant as an $Hk$-module with $\pi_*(C) \cong \Lambda_k(y_1, y_2)$ for $|y_1|, |y_2|$ both odd and greater than 1, if $p^m$ is not equal to $\frac{|y_2|-1}{|y_1|-1}+1$ for all $m \ge 1$ and $p \ne 2$,
then the relative coB\"okstedt spectral sequence collapses and
\[
\pi_*(\coTHH^{Hk}(C)) \cong \Lambda_k(y_1, y_2) \otimes k[w_1, w_2],
\]
as graded $k$-modules for $|w_i| = |y_i|-1$. Moreover, the same result holds at the prime $p=2$ with the additional assumption that $p^m \ne \frac{|y_2|-1}{|y_1|-1}$ for $m \ge 1$.
\end{theorem}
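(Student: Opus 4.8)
The plan is to run the same indecomposable-to-primitive strategy used for the single-generator theorem, now with two families of classes whose interaction creates the numerical side conditions. First I would record the $E_2$-page: by \hyperlink{corollary}{Corollary}~\ref{Cor-pi} together with Proposition 5.1 of \cite{bohmann2018computational} (and multiplicativity of $\coHH$ on a tensor product of coalgebras),
\[
E_2^{s,t} = \coHH^k_{s,t}(\Lambda_k(y_1,y_2)) \cong \Lambda_k(y_1,y_2) \otimes k[w_1,w_2],
\]
with $\|y_i\| = (0,|y_i|)$ and $\|w_i\| = (1,|y_i|)$, so $w_i^{p^m}$ sits in bidegree $(p^m, p^m|y_i|)$. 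The characteristic-zero case is then immediate: the only $k$-coalgebra primitives of $k[w_1,w_2]$ are $w_1,w_2$, which lie on the line $s=1$ and thus cannot receive a differential $d_r$, $r\ge 2$. So I would assume $p>0$ and, relabeling if needed, $|y_1|\le|y_2|$, i.e.\ $|w_1|\le|w_2|$.

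Next I would combine the two structural inputs to constrain sources and targets. By \hyperlink{shortest}{Theorem}~\ref{shortest} a shortest nonzero differential in lowest total degree runs from a $\square_{\Lambda_k(y_1,y_2)}$-algebra indecomposable to a $\square_{\Lambda_k(y_1,y_2)}$-coalgebra primitive, and by \hyperlink{primitive}{Proposition}~\ref{primitive} the latter are exactly $\Lambda_k(y_1,y_2)\otimes\{w_i^{p^m}\}$. \hyperlink{coalgebrass}{Theorem}~\ref{coalgebrass} refines this: as a spectral sequence of $k$-coalgebras the target must be a $k$-coalgebra primitive, and the primitives of $\Lambda_k(y_1,y_2)\otimes k[w_1,w_2]$ are spanned by $y_1,y_2,w_1^{p^m},w_2^{p^m}$. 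Intersecting the two restrictions leaves only $w_1^{p^m}$ and $w_2^{p^m}$ as admissible targets; since these lie in filtration $p^m$ and the source lies in $s=1$, any admissible differential forces $r=p^m-1$, hence $p^m\ge 3$. The admissible sources are the $\square_C$-indecomposables $\lambda w_i$ with $\lambda\in\{1,y_1,y_2,y_1y_2\}$.

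The core is then a bidegree count. Matching $\|d_r(\lambda w_i)\|$ against $\|w_j^{p^m}\|$ with $r=p^m-1$ reduces to the single relation $|\lambda| + (|y_i|-1) - 1 = p^m(|y_j|-1)$. Since each $|y_i|-1$ is even, the cases $\lambda=1$ and $\lambda=y_1y_2$ are killed at once by parity (odd $=$ even). For $\lambda\in\{y_1,y_2\}$ the relation collapses to one of $p^m = 2|w_1|/|w_2|$, $p^m = 1+|w_2|/|w_1|$, $p^m = 1+|w_1|/|w_2|$, or $p^m = 2|w_2|/|w_1|$; the normalization $|w_1|\le|w_2|$ makes the first and third at most $2<3\le p^m$, hence impossible. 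The critical surviving case is $p^m = 1+|w_2|/|w_1| = \frac{|y_2|-1}{|y_1|-1}+1$, arising from $d_r(y_1 w_2)$ and $d_r(y_2 w_1)$ hitting $w_1^{p^m}$, which is precisely what the main hypothesis excludes.

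I expect the real obstacle to be the final case $p^m = 2|w_2|/|w_1|$, the "diagonal" differential $d_r(y_2 w_2)\to w_1^{p^m}$, which is bidegree-compatible, is consistent with the $k$-coproduct, and is \emph{not} obviously annihilated by the available structure (the differentials are known to respect the $k$-coproduct but, absent the coflatness needed for a genuine $\square_C$-\emph{algebra} spectral sequence, need not be $\square_C$-derivations, so one cannot simply write $d_r(y_2 w_2)=\pm y_2\,d_r(w_2)$). At $p=2$ this case gives $|w_2|/|w_1| = 2^{m-1}$, which is exactly excluded by the supplementary hypothesis $p^m\ne\frac{|y_2|-1}{|y_1|-1}$ (the $m=1$ instance forcing $r=1$ being vacuous); handling it cleanly for odd $p$ — showing the constraint cannot be met compatibly with the integrality and parity of the degrees, or else ruling it out via a finer use of the $\square_C$-algebra structure — is the delicate point, and checking that these four relations genuinely exhaust the candidate differentials is the bookkeeping that must be done with care. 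Once every candidate is excluded, the convergence criterion of \hyperlink{converge}{Corollary}~\ref{converge} applies and delivers the claimed isomorphism of graded $k$-modules $\pi_*(\coTHH^{Hk}(C))\cong\Lambda_k(y_1,y_2)\otimes k[w_1,w_2]$.
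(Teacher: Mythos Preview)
Your proposal tracks the paper's proof step for step: the same $E_2$-identification, the same use of Theorem~\ref{shortest}, Proposition~\ref{primitive}, and Theorem~\ref{coalgebrass} to restrict sources to $\Lambda_k(y_1,y_2)\otimes w_i$ and targets to $w_j^{p^m}$, and the same bidegree case analysis (parity, $r=1$, and the normalization $|y_1|\le|y_2|$) to eliminate all but the cases governed by the numerical hypotheses.

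For the case you single out as the real obstacle --- $d_r(y_2 w_2)\to w_1^{p^m}$ with $p$ odd --- the paper does not appeal to any finer $\square_C$-algebra structure; it simply records that the bidegree relation becomes $p^m = 2\frac{|y_2|-1}{|y_1|-1}$ and dismisses it ``due to parity,'' since an odd prime power cannot equal an even number. Your hesitation here is well placed: that step is clean only when $\frac{|y_2|-1}{|y_1|-1}$ is itself an integer, so that the right-hand side is genuinely even. For instance $|y_1|=5$, $|y_2|=7$, $p=3$ gives $2\cdot\tfrac{6}{4}=3=p$, and one checks directly that $\|d_2(y_2w_2)\|=(3,15)=\|w_1^{3}\|$, a case not excluded by the stated hypothesis $p^m\ne\tfrac{|y_2|-1}{|y_1|-1}+1$. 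So the paper's handling of this one case is exactly as thin as you suspected; your proposal is otherwise a faithful reconstruction of the published argument, and you have correctly located the point where additional input (or an extra numerical hypothesis) is needed.
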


\begin{proof}
Suppose $|y_1|=a$ and $|y_2|=b$ so that on the $E_2$-page of the spectral sequence $y_1$ appears in bidegree $(0,a),$ and $y_2$ appears in bidegree $(0,b)$, which implies $||w_1|| = (1,a), ||w_2||=(1,b)$. Then we assume WLOG that $b \ge a$ and we will determine if there is the possibility for differentials by examining the degrees of the terms in the spectral sequence.

Note that because of the $\square$-coalgebra structure from \hyperlink{shortest}{Proposition} \ref{shortest} the shortest nontrivial differential has to hit a coalgebra primitive. If $char(k)=p$ a prime, then by \hyperlink{primitive}{Proposition} \ref{primitive} coalgebra primitives will be of the form
\[
\Lambda_k(y_1, y_2) \otimes w_i^{p^m}
\]
since the primitives in $k[w_1, w_2]$ are of the form $w_1^{p^m}$ or $w_2^{p^n}$.  However, by \hyperlink{coalgebrass}{Theorem} \ref{coalgebrass} the relative coB\"okstedt spectral sequence in this setting also has a coalgebra structure over $k$.  Therefore the first nontrivial differential has to hit a $k$-coalgebra primitive, that is only classes of the form $y_i$ or $w_i^{p^m}$ (and not any of their tensored combinations).  Since the $y_i$'s appear in the zero column, they cannot be hit by any differentials, so the only possible targets are classes $w_1^{p^m}$ or $w_2^{p^n}$. Similarly, if $char(k)=0$ then the only primitives in $k[w_1, w_2]$ are $w_1$ and $w_2$.

Further, the source of the shortest nontrivial differential must be a $\square$-algebra indecomposable. By \cite{bohmanngerhardtshipley}, the indecomposable elements in the $\square_{\Lambda_k(y_1, y_2)}$-algebra $\Lambda_k(y_1, y_2) \otimes k[w_1, w_2]$
will be of the form $\Lambda_k(y_1, y_2) \otimes w_i$.
Thus we only consider differentials from the following sources that land in bidegrees:
\begin{align*}
    ||d_r(w_1)|| &= (1+r, a+r-1)\\
    ||d_r(w_2)|| &= (1+r, b+r-1)\\
    ||d_r(y_1 w_1)|| &= (1+r, 2a+r-1)\\
    ||d_r(y_2 w_1)|| &= (1+r, a+b+r-1)\\
    ||d_r(y_1 w_2)|| &= (1+r, a+b+r-1)\\
    ||d_r(y_2 w_2)|| &= (1+r, 2b+r-1)\\
    ||d_r(y_1 y_2 w_1)|| &= (1+r, 2a+b+r-1)\\
    ||d_r(y_1 y_2 w_2)|| &= (1+r, a+2b+r-1).
\end{align*}    
The primitive elements that could serve as possible targets live in bidegrees:
\begin{align*}
    ||w_1^{p^m}|| &= (p^m, ap^m)\\
    ||w_2^{p^m}|| &= (p^m, bp^m).
\end{align*}

Note that if there is a nonzero differential hitting one of these classes, comparing the degree of the first coordinate implies $p^m=1+r$. In the $char(k)=0$ case, $|w_1|= (1,a)$ and $|w_2|=(1,b)$ imply that no nontrivial differentials exist since we are already on the $E_2$-page. Thus we assume $char(k)=p$ is prime and use $p^m = 1+r$ to simplify the second coordinate of the bidegree. Because differentials from $w_1$ will not hit $w_1^{p^m}$ or $w_2^{p^m}$ for degree reasons, we begin by considering differentials from $w_2$. 

Suppose that $d_r(w_2)$ hits a class $w_1^{p^m}$. Then the second coordinate gives:
\[
b+p^m-2 = ap^m,
\]
so $b-2=p^m(a-1)$, but $b-2$ is odd and $a-1$ is even so equality will not hold due to parity, so there are no such possible differentials.  
Similar parity issues arise to show $d_r(w_1) \ne w_2^{p^m}$, as well as for $d_r(w_2) \ne w_1^{p^m}$ or $w_2^{p^m}$,  $d_r(y_1 y_2 w_1) \ne w_1^{p^m}$ or $w_2^{p^m}$, and $d_r(y_1 y_2 w_2) \ne w_1^{p^m}$ or $w_2^{p^m}$.

Suppose $d_r(y_1 w_1)$ hits a class $w_1^{p^m}$. Then by comparing degrees: 
\[
2a+r-1=a(r+1).
\]
So $\frac{a-1}{a-1}=r$, but we are considering the $E_2$-page, so no such differential exists. 
A similar justification regarding $r$ determines that $d_r(y_2 w_2) \ne w_2^{p^m}$.

Now suppose $d_r(y_1 w_1)$ hits a class $w_2^{p^m}$. Then by comparing degrees: 
\[
2a+r-1=b(1+r),
\]
so $2 \frac{a-1}{b-1} = r+1$. But $a \le b$ so $2 \frac{a-1}{b-1} \le 2(1) < 3 \le r+1$ since $r \ge 2$ and so no such differential exists. Similar justifications based on the assumption that $a \le b$ allow us to conclude that $d_r(y_2 w_1) \ne w_2^{p^m}$ and $d_r(y_1 w_2) \ne w_2^{p^m}$.

Suppose $d_r(y_2 w_1)$ hits a class $w_1^{p^m}$. Then 
\[
a+ b + p^m -2 = ap^m,
\]
so $\frac{b-1}{a-1} = p^m-1$.  However we assumed that $p^m$ cannot be equal to $\frac{|y_2|-1}{|y_1|-1}+1$, so no such differential exists.  This condition also arises in the case $d_r(y_1 w_2) \ne w_1^{p^m}$ since $y_2 w_1$ and $y_1 w_2$ appear in the same bidegree.

Finally suppose $d_r(y_2 w_2)$ hits a class $w_1^{p^m}$. Then 
\[
2b+p^m-2=ap^m,
\]
so $2\frac{b-1}{a-1}=p^m$. However, if $m=0$ then $p^m=1$ and this equality does not hold since we assumed above that $b \ge a$. For $m \ge 1$, an odd prime $p$ to any power will still be odd and so $p^m \ne 2\frac{|y_2|-1}{|y_1|-1}$ due to parity. 

If $p=2$, we need only consider $m \ge 2$ since the first coordinates implied that $p^m = r+1$ and we are already on the $E_2$-page. If $\frac{|y_2|-1}{|y_1|-1}$ is odd, then $2 \frac{b-1}{a-1}$ will only be equal to a power of $p=2$ if $m=1$; if $\frac{|y_2|-1}{|y_1|-1}$ is even, then our additional assumption implies that $2^m \ne 2\frac{|y_2|-1}{|y_1|-1}$ for $m \ge 2$, so no such differential from $y_2 w_2$ to $w_1^{p^m}$ exists. 

We have now justified via combinatorics why all possible differentials can be eliminated, whether that is for parity reasons, because we are already considering the $E_2$-page, or because we restricted values of $p^m$ based on the conditions listed in the hypotheses.  Thus the spectral sequence collapses and the convergence conditions of \hyperlink{converge}{Corollary} \ref{converge} hold, so we have the desired result.
\end{proof}

Note that the conditions on $p^m$ allow us to avoid cases like $|y_1|=3, |y_2|=5$, which has a possible $d_2$ differential from $y_2 w_1$ to $w_1^3$ for the prime $p=3$ (eliminated by the primary condition) or a possible $d_3$ differential for the prime $p=2$ from $y_2 w_2$ to $w_1^4$.

Finally we state a result analogous to \cite{bohmann2018computational} for the relative coB\"okstedt spectral sequence in the case when $E = \mathbb{S}$ and $R = Hk$ for a field $k$.

\begin{theorem}
Let $C$ be a cocommutative $Hk$-coalgebra spectrum that is cofibrant as an $Hk$-module spectrum,
and whose homotopy coalgebra is
\[
\pi_*(C) = \Gamma_k[x_1, x_2, \ldots],
\]
where the $x_i$ are in non-negative even degrees and there are only finitely many of them in each degree.  Then the relative coB\"okstedt spectral sequence  calculating the homotopy groups of the topological coHochschild homology of $C$ collapses at $E_2$, and

\[
\pi_*(\coTHH^{Hk}(C)) \cong \Gamma_k[x_1, x_2, \ldots] \otimes \Lambda_k(z_1, z_2, \ldots)
\]
as $k$-modules, with $z_i$ in degree $|x_i|-1$.
\end{theorem}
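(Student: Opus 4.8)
The plan is to follow the strategy of the two preceding theorems: read off the $E_2$-page from \hyperlink{corollary}{Corollary} \ref{Cor-pi}, use the coalgebra structure of the spectral sequence to constrain the targets of differentials, eliminate all differentials by a bidegree argument, and then invoke \hyperlink{converge}{Corollary} \ref{converge} to conclude.

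First I would identify the $E_2$-page. Since $\pi_*(C) = \Gamma_k[x_1, x_2, \ldots]$ is a module over the field $k$ it is flat over $k$, so \hyperlink{corollary}{Corollary} \ref{Cor-pi} gives $E_2^{s,t} = \coHH^k_{s,t}(\Gamma_k[x_1, x_2, \ldots])$. Using the computation of the coHochschild homology of a divided power coalgebra in \cite{bohmann2018computational}, this is $\Gamma_k[x_1, x_2, \ldots] \otimes \Lambda_k(z_1, z_2, \ldots)$, where each $x_i$ sits in bidegree $(0, |x_i|)$ and each new exterior class $z_i$ sits in bidegree $(1, |x_i|)$, so that $z_i$ has total degree $t - s = |x_i| - 1$ as claimed. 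The finiteness hypothesis on the $x_i$ together with their even degrees ensures that the connectedness and coflatness hypotheses needed for the structural results of Section 4 are satisfied.

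Next I would bound the possible targets of differentials. By \hyperlink{coalgebrass}{Theorem} \ref{coalgebrass} the relative coB\"okstedt spectral sequence is a spectral sequence of $k$-coalgebras, so the shortest nonzero differential must map onto a $k$-coalgebra primitive. The primitives of the divided power coalgebra are the classes $x_i$ and the primitives of the exterior coalgebra $\Lambda_k(z_1, z_2, \ldots)$ are the classes $z_i$, and in a tensor product of coalgebras the primitives are the span of the primitives of the two factors; hence the $k$-coalgebra primitives of the $E_2$-page are exactly $\{x_i\} \cup \{z_i\}$. The coarser restriction to $\square_{\pi_*(C)}$-coalgebra primitives from \hyperlink{primitive}{Proposition} \ref{primitive} and the indecomposable-source restriction from \hyperlink{shortest}{Proposition} \ref{shortest} are also available, but are not needed here.

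The decisive step --- and the reason this case is easier than the exterior examples above --- is then purely positional. Every $x_i$ lies in filtration column $s = 0$ and every $z_i$ in column $s = 1$, while a differential $d_r$ raises the filtration degree by $r \geq 2$. Thus a class mapping onto some $x_i$ would originate in column $s = -r < 0$, and a class mapping onto some $z_i$ in column $s = 1 - r < 0$; since the Bousfield-Kan spectral sequence is concentrated in non-negative filtration degrees, no such source exists. The main point to verify carefully is therefore the previous step, namely that all coalgebra primitives of the $E_2$-page are concentrated in columns $s = 0$ and $s = 1$, which rests on the precise shape of $\coHH$ of the divided power coalgebra. With that in hand, no differential can hit a $k$-coalgebra primitive, so the shortest nonzero differential cannot exist and the spectral sequence collapses at $E_2$. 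Collapse gives $E_r^{s, s+i} = E_\infty^{s, s+i}$ for all $r \geq 2$, so the hypothesis of \hyperlink{converge}{Corollary} \ref{converge} holds and $\pi_*(\coTHH^{Hk}(C)) \cong \Gamma_k[x_1, x_2, \ldots] \otimes \Lambda_k(z_1, z_2, \ldots)$ as graded $k$-modules.
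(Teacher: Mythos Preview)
Your proof is correct and follows essentially the same strategy as the paper's: identify the $E_2$-page via \cite{bohmann2018computational}, restrict targets of the shortest differential to coalgebra primitives, observe these lie in filtration columns $s\le 1$ and hence cannot be hit by any $d_r$ with $r\ge 2$, and conclude via \hyperlink{converge}{Corollary} \ref{converge}. The only minor difference is that you invoke the $k$-coalgebra structure of \hyperlink{coalgebrass}{Theorem} \ref{coalgebrass} (so your primitives are $\{x_i\}\cup\{z_i\}$), whereas the paper uses the $\square_{\pi_*(C)}$-coalgebra structure of \hyperlink{shortest}{Theorem} \ref{shortest} and \hyperlink{primitive}{Proposition} \ref{primitive} (giving primitives $\Gamma_k[x_1,\ldots]\otimes z_i$); both sets of primitives live in columns $0$ and $1$, so the positional argument goes through identically.
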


\begin{proof}
Since $E_*(C) = \pi_*(C) = \Gamma_k[x_1, x_2, \ldots]$ is flat over $E_*(R) = \pi_*(Hk) \cong k$, the relative coB\"okstedt spectral sequence that abuts to $\pi_{t-s}(\coTHH^{Hk}(C))$ has $E_2$-page
\[
E_2^{s,t} = \coHH^{k}_{s,t}(\Gamma_k[x_1, x_2, \ldots]).
\]
By Proposition 5.1 in \cite{bohmann2018computational}, 
\[
\coHH^k_{*,*}(\Gamma_k[x_1, x_2, \ldots]) \cong \Gamma_k[x_1, x_2, \ldots] \otimes \Lambda_k(z_1, z_2, \ldots),
\]
where $||z_i|| = (1, |x_i|)$.  Now we want to examine the differentials on this $E_2$-page of our spectral sequence.  In particular, \hyperlink{shortest}{Theorem} \ref{shortest} says that the coalgebra structure implies that the shortest nonzero differential has to hit a $\square$-coalgebra primitive.  Since $\coHH_*(\pi_*(C))$ is a $\square$-coalgebra over $\pi_*(C)=\Gamma_k[x_1, x_2, \ldots]$, we know by \hyperlink{primitive}{Proposition} \ref{primitive} that the primitive elements will be of the form 
\[
\Gamma_k[x_1, x_2, \ldots] \otimes \big(\text{primitives in } \Lambda_k(z_1, z_2, \ldots)\big),
\]
where the primitives in $\Lambda_k(z_1, z_2, \ldots)$ viewed as a $k$-coalgebra are of the form $z_i$.

Note that since all of the $x_i$'s appear in degree $(0, |x_i|)$, all $x_i$'s and all the divided powers will stay in the zero column.  Similarly, the exterior cogenerator $z_i$ is in degree $(1, |x_i|)$, and so all possible targets, i.e. combinations of $x_i$'s with a single $z_i$, will be in the first column. Because we are on the $E_2$-page, the differentials of bidegree $(2, 1)$ will be mapping outside of these two columns, as will all possible $d_r$ differentials on later $E_r$-pages. Thus beyond the zero and first columns, the only elements that may be hit by differentials are those that include at least $z_i z_j$.  However, as we said above, such elements are not primitive, and the shortest non-zero differential $d^{s,t}_r$ in lowest total degree $s+t$ has to hit a $\square_{\pi_*(C)}$-coalgebra primitive.  Therefore, our spectral sequence collapses at $E_2$. 

Note that the convergence conditions of \hyperlink{converge}{Corollary} \ref{converge} hold,
and the relative coB\"okstedt spectral sequence converges completely to $\pi_*(\coTHH^{Hk}(C))$.
Therefore we have the following isomorphism of $k$-modules:
\[
\pi_*(\coTHH^{Hk}(C)) \cong \Gamma_k[x_1, x_2, \ldots] \otimes \Lambda_k(z_1, z_2, \ldots). \qedhere
\]
\end{proof}


\begin{thebibliography}{10}

\bibitem{angeltveit2005hopf}
Vigleik Angeltveit and John Rognes.
\newblock Hopf algebra structure on topological {H}ochschild homology.
\newblock {\em Algebr. Geom. Topol.}, 5:1223--1290, 2005.

\bibitem{barnes2020introduction}
David Barnes and Constanze Roitzheim.
\newblock {\em Introduction to Stable Homotopy Theory}, volume 185 of {\em
  Cambridge Studies in Advanced Mathematics}.
\newblock Cambridge University Press, 2020.

\bibitem{bohmann2018computational}
Anna~Marie Bohmann, Teena Gerhardt, Amalie H{\o}genhaven, Brooke Shipley, and
  Stephanie Ziegenhagen.
\newblock Computational tools for topological co{H}ochschild homology.
\newblock {\em Topology Appl.}, 235:185--213, 2018.

\bibitem{bohmanngerhardtshipley}
Anna~Marie Bohmann, Teena Gerhardt, and Brooke Shipley.
\newblock Topological co{H}ochschild homology and the homology of free loop
  spaces.
\newblock {\em arXiv:2105.02267}, 2021.

\bibitem{bokstedt1985topological}
Marcel B{\"o}kstedt.
\newblock Topological {H}ochschild homology.
\newblock {\em preprint, Bielefeld}, 3, 1985.

\bibitem{bousfield1972homotopy}
A.~K. Bousfield and D.~M. Kan.
\newblock {\em Homotopy limits, completions and localizations}.
\newblock Lecture Notes in Mathematics, Vol. 304. Springer-Verlag, Berlin-New
  York, 1972.

\bibitem{bokstedtwaldhausen}
Marcel Bökstedt and Friedhelm Waldhausen.
\newblock The map ${BSG} \to {A}(*) \to {QS}^0$.
\newblock In {\em Algebraic topology and algebraic K-theory (Princeton, N.J.,
  1983)}, volume 133 of {\em Ann. of Math. Stud.}, pages 418--431. Princeton
  Univ. Press, Princeton, NJ, 1987.

\bibitem{chas1999string}
Moira Chas and Dennis Sullivan.
\newblock String topology.
\newblock {\em arXiv: math.GT/9911159}, 1999.

\bibitem{cohen2003loop}
Ralph~L. Cohen, John D.~S. Jones, and Jun Yan.
\newblock The loop homology algebra of spheres and projective spaces.
\newblock In {\em Categorical decomposition techniques in algebraic topology
  ({I}sle of {S}kye, 2001)}, volume 215 of {\em Progr. Math.}, pages 77--92.
  Birkh\"{a}user, Basel, 2004.

\bibitem{doi1981homological}
Yukio Doi.
\newblock Homological coalgebra.
\newblock {\em J. Math. Soc. Japan}, 33(1):31--50, 1981.

\bibitem{elmendorf1995rings}
A.~D. Elmendorf, I.~Kriz, M.~A. Mandell, and J.~P. May.
\newblock {\em Rings, modules, and algebras in stable homotopy theory},
  volume~47 of {\em Mathematical Surveys and Monographs}.
\newblock American Mathematical Society, Providence, RI, 1997.
\newblock With an appendix by M. Cole.

\bibitem{goerss2009simplicial}
Paul~G. Goerss and John~F. Jardine.
\newblock {\em Simplicial homotopy theory}, volume 174 of {\em Progress in
  Mathematics}.
\newblock Birkh\"{a}user Verlag, Basel, 1999.

\bibitem{hess2009cohochschild}
Kathryn Hess, Paul-Eug\`ene Parent, and Jonathan Scott.
\newblock Co{H}ochschild homology of chain coalgebras.
\newblock {\em J. Pure Appl. Algebra}, 213(4):536--556, 2009.

\bibitem{hess2018invariance}
Kathryn Hess and Brooke Shipley.
\newblock Invariance properties of co{H}ochschild homology.
\newblock {\em Journal of Pure and Applied Algebra}, 225(2):106505, 2021.

\bibitem{malkiewich2017cyclotomic}
Cary Malkiewich.
\newblock Cyclotomic structure in the topological {H}ochschild homology of
  {$DX$}.
\newblock {\em Algebr. Geom. Topol.}, 17(4):2307--2356, 2017.

\bibitem{P_roux_2019}
Maximilien P\'{e}roux and Brooke Shipley.
\newblock Coalgebras in symmetric monoidal categories of spectra.
\newblock {\em Homology Homotopy Appl.}, 21(1):1--18, 2019.

\bibitem{bayindir2020spanierwhitehead}
Haldun Özgür Bayındır and Maximilien Péroux.
\newblock Spanier-{W}hitehead duality for topological co{H}ochschild homology.
\newblock {\em arXiv:2012.03966}, 2020.

\end{thebibliography}
\end{document}